\newtheorem{theorem}{Theorem}[section]
\newtheorem{lemma}[theorem]{Lemma}
\newtheorem{proposition}[theorem]{Proposition}
\newtheorem{corollary}[theorem]{Corollary}
\newtheorem{definition}[theorem]{Definition}
\newtheorem{remark}[theorem]{Remark}
\newtheorem{example}[theorem]{Example}
\newcommand{\Ker}{\operatorname{Ker}}
\begin{document}
\title{On Function of Evolution of Distribution for Time Homogeneous Markov Processes}

\author{Tomasz R. Bielecki\footnote{ \small
  Department of Applied Mathematics,
	Illinois Institute of Technology,
	Chicago, IL 60616, USA
		e-mail: \url{tbielecki@iit.edu}},
		 Jacek Jakubowski\footnote{
		 	Institute of Mathematics, University
		 	of Warsaw,
		 	Banacha 2, 02-097 Warszawa, Poland,
		 	\url{J.Jakubowski@mimuw.edu.pl}}  and  Maciej Wi\'sniewolski\footnote{ corresponding author:
	 Institute of Mathematics, University
	of Warsaw,
	Banacha 2, 02-097 Warszawa, Poland,
	 	\url{ M.Wisniewolski@mimuw.edu.pl} }
}

\maketitle
\thispagestyle{empty}

\begin{abstract}
A study of time homogeneous, real valued Markov processes with a special property and a non-atomic initial distribution is provided. The new notion  of a function of evolution of distribution which determines the dependency between one dimensional distributions of a process is introduced. This, along with the notion of bridge operators which determine the backward structure, as opposed to the forward structure determined by the usual semi-group operators, paves a way to the new  approach for dealing with finite-dimensional distributions of Markov processes. This, in particular, produces explicit formulas which effectively simplify the computations of finite-dimensional distributions,  giving an alternative to the standard approach based on computations using the chain rule of transition densities. Various examples illustrating the new approach are presented.
\end{abstract}

\noindent
\begin{quote}
\noindent  \textbf{Key words}:  Markov processes, function of evolution of distribution, Feller processes, bridge operator, conditional dependence, Kolmogorov distance

\ \\
\textbf{AMS Subject Classification}:  60J35, 60H30, 91G80.

\end{quote}


\section{Introduction}

 We study a class of Markov processes with a non-atomic initial distribution and provide their special property. We introduce a new notion  of a function of evolution of distribution (in short FOED), which determines the dependency between one dimensional distributions of the process. We develop an original analysis based on the so called bridge operators which determine the backward structure which is  opposite to the forward structure determined by the usual semi-group operators.

The usual distributional description of a Markov process relies on determining the form of its transition density which by the classical chain rule gives the formula for finite dimensional distributions of the process (see \cite{RY}). If the transition density can not be determined explicitly the usual tool to find its approximate form involves Fokker-Planck equations. Some relatively new studies in this area can be find in \cite{Sar}, \cite{Lac} and in case of diffusions the older results are in  \cite{Sche}.

Development of various techniques to determine finite dimensional distributions of Markov processes has been motivated by their role in various applications, in particular in finance.  Surprisingly, the list of Markovian models for which such distributions have not yet been described explicitly is still quite vast. These include the so called stochastic volatility models, which are systems of stochastic differential equations of various types (\cite{Fo}, \cite{Fo2} \cite{Mu}, \cite{Ber}). While studying a class of stochastic volatility models Jakubowski and Wi\'sniewolski (\cite{JW09}, \cite{JW20})  discovered phenomena which we present and study in this work. Specifically, for Markov process with some non-trivial initial distribution we present a new way to describe the relations between its finite dimensional distributions. This is done by introduction of a function, which we call the function of evolution of distribution, and a related property which we call the FOED property. We derive several important consequences stemming from the FOED property. It  turns out that for the class of Feller processes having the FOED property a  one-dimensional distribution   can be determined explicitly in terms of the initial distribution and the generator. There is no need to look for the transition density.

The  FOED property for a stochastic process is strictly connected to the Markov property, and in general to properties of Markov families. The notion of FOED leads to a new backward approach to analysis of a Markov process in terms of so called bridge operators. Instead of looking at the starting point and continue with classical chain rule, the bridge operators pave a way to new iteration-type formulas describing the finite dimensional distributions of Markov processes. It turns out that under some assumptions about the underlying Markov process the computations of finite dimensional distributions of the process can be reduced to dealing with independent copies of the initial distribution of this process. Last but not least, the bridge operators and the FOED property deliver a new description of conditional distributions of Markov processes. 

The paper is organized as follows.
In Section 2 we define the FOED property, we present properties of processes with the FOED property and we give illustrating examples.
In Section 3 we investigate Feller processes with the  FOED property. In particular,  we prove that a  one-dimensional distribution   can be determined explicitly in terms of the initial distribution and the infinitesimal generator of the process.
In Section 4 we define a bridge operator and we observe that the finite dimensional distribution of a Markov process $X$, enjoying the FOED property, may be determined  in terms of bridge operators and the FOED property.  This gives a new algorithmic approach to the computational problems related to finite dimensional distributions of a Markov process. The algorithm uses the initial distribution of $X$ and a form of bridge operator. Using our methodology, in Section 5 we compute the expectation
$\mathbb{E}\big(\Psi\big(X_{0},\ldots,X_{t_n}\big)\big|  X_{s + t_1 + \ldots t_n} = z \big) $
for a Markov process $X$ with the FOED property, a   bounded Borel function $\Psi$, the discrete time tenor $0 < t_1 < \ldots  < t_n$ and $s\geq 0$.
In Section 6 we  present an application of  the FOED property to finding  the Kolmogorov distance
 between    $B_t$ and $X_t$, where $B$ is a Brownian motion with the  initial distribution $\gamma$ and  $X$ is a Markov process having the  FOED property and   the  initial distribution $\gamma$.

\section{Function of Evolution of Distribution}

We consider a complete probability space
$(\Omega,\mathcal{F},\mathbb{P})$. On this probability space we consider a  c\'adl\'ag
process $X$ with values in a Polish space $(E,\mathcal{E})$
and with the initial non-atomic distribution $\gamma$. In examples we usually have $E\subset \mathbb{R}^d$. The assumption about non-atomic  $\gamma$ is not restrictive since for a time-homogeneous Markov process $X$ the process defined by $(Y_t) = (X_{\epsilon + t})$, $\epsilon >0$ suits our setup. By $ \mathbb{F}^X=(\mathcal{F}^X_t)_{t\geq 0}$ we denote the filtration generated by $X$ and augmented to satisfy usual conditions. When we say that $X$ is a Markov process, we understand that $X$ is a Markov process with respect to $\mathbb{F}^X$.  We assume that the shift operator $\theta$ is well defined, so $X_s\circ \theta_t = X_{t+s}$.
 We start with the notion of a function of evolution of distribution.

\begin{definition} We say that process $X$ admits the  FOED property  if  there exists a Borel measurable  function
	$F:R_+\times E \mapsto R$ such that for every $y \in E$ the function $u\mapsto F(u,y)$  is  locally integrable  and
		\begin{align} \label{dfFED}
			\mathbb{E}f(X_t) = \mathbb{E} \big(f(X_0) \, e^{\int_0^tF(u,X_0)du} \big),
		\end{align}
	for every bounded Borel function $f$
	and every $t\geq 0$. We call such $F$  the function of evolution of distribution for $X$.
\end{definition}

Please note that distribution of the random variable $X_t$ is determined by values of $\mathbb{E}f(X_t)$ for bounded Borel functions $f$, or by values of $\mathbb{E}f(X_t)$ for positive Borel functions $f$. Thus, we may replace in the above definition the class of bounded Borel functions $f$ with the class of positive Borel functions $f$.

Moreover, if such $F$ exists then it is uniquely determined on the support of distribution of $X_0$.
Also, it should be noted that in general $F$ depends on the initial distribution $\gamma$, but for simplicity of notation we do not explicitly  exhibit this dependence. Let observe however that the notion of the function of evolution of distribution for $X$ is trivial, ergo not useful, in case of some initial distributions of $X$. For example, if $X_0 = x$ and $X$ possesses the  FOED property, then
	$X_t = x$ a.s. for every $t$. Indeed, taking $f\equiv 1$ shows that $e^{\int_0^tF(u,x)} = 1$ for every $t$ and (\ref{dfFED}) implies $\mathbb{E}f(X_t) = f(x)$ for every bounded Borel function $f$. For a nontrivial example, where the FOED property is not useful, it is enough to consider a $X_t = X_0 + B_t$, where $B$ is a standard Brownian motion and $X_0$ is a random variable independent of $B$, uniformly distributed on $[0,1]$.

As usual, given a process $X$,  then by  $\mathbb{P}_x$ for $x\in E$ we denote the probability measure defined on $\mathcal{F}$ by
\begin{align*}
	\mathbb{P}_x (A)= \mathbb{P}(A|X_0 = x), \quad A\in\mathcal{F},
\end{align*}
where the right hand side should be understood in the sense of the regular conditional probabilities.

The following example is a result of an earlier study, done by Jakubowski and Wi\'sniewolski in \cite{JW20}, of a class of SDEs that are widely used in finance. It turns out that processes given in terms of these SDEs posses the  FOED property. Actually, the paper \cite{JW20} initiated the study of the FOED property, motivation for which was the desire to simplify complexity of  calculations that traditionally plagued  applications  of the stochastic volatility models (as documented in, e.g., \cite{G}, \cite{Hag1}, \cite{Hul}, \cite{Jou}, \cite{RE}, \cite{Sin}).

\begin{example}
Consider a linear stochastic volatility model, where we assume that  the price $X_t$ at time $t$
of some underlying financial asset  has a stochastic volatility
$Y_t$, which has the dynamics given by a time dependent
 SDE,
so the dynamics of the vector $(X, Y)$ is given by
\begin{align}
    dX_t &= Y_tX_tdW_t, \label{defX} \\
    dY_t &= \mu(t,Y_t)\ dt + \sigma (t,Y_t)dZ_t, \label{defY}
\end{align}
with $X_0 = x$, $Y_0=y$ which are positive constants. Here the processes $W, Z$ are
correlated Brownian motions, $d{\left\langle W,Z\right\rangle}_t =
\rho dt$ with $\rho\in(-1,1)$,  and $\mu: \mathbb{R}_+ \times
\mathbb{R}_+ \rightarrow \mathbb{R}$, $\sigma: \mathbb{R}_+ \times
\mathbb{R}_+ \rightarrow \mathbb{R}$ are continuous functions such
that there exists a unique positive strong solution of \eqref{defY}. It turns out that for such a process $X$ and independent of it  gamma random variable  $\gamma$ with parameters $\alpha=1/2$ and $\beta=1$, {i.e. random variable with density $g(x) = \frac{1}{\sqrt{\pi x}}e^{-x}$}   we have
{\begin{align*}
	d\Big(\frac{X_t^2}{4\gamma}\Big) = 2\Big(\frac{X_t^2}{4\gamma}\Big)Y_tdW_t +  \Big(\frac{X_t^2}{4\gamma}\Big)Y_t^2dt
\end{align*}
and
\begin{equation*}
	\mathbb{P}\Big(\frac{X_t^2}{4\gamma}\in dz\Big) =	\mathbb{P}\Big(\frac{x^2}{4\gamma}\in dz\Big)e^{\int_0^tF(u,z)du}
\end{equation*}
}
(\cite[Thm. 3.5, Corr. 3.6]{JW20}). This means  that  process $\frac{X_t^2}{4\gamma}$ has the  FOED property.
The use of the FOED property also gives
\begin{align*}
	\mathbb{E}\Big(X_te^{-\lambda X_t^2}\Big) = x \exp\Big(-\lambda x^2 + \int_0^tF(u,1/4\lambda)du\Big), \quad \lambda > 0,
\end{align*}
where $F$ is the function of evolution of distribution of $\frac{X_t^2}{4\gamma}$
(\cite[Prop.3.7]{JW20}). This shows that determining $F$ gives the explicit formula for a (slightly transformed) Laplace transform of the random variable $X_t^2$. 
\end{example}

Since we are interested in time homogenous Markov processes we start with a proposition giving a broad class of such  processes admitting the FOED property for each initial distribution  with strictly positive density.
In particular, the proposition gives a way to construct the FOED function $F$. By $\mathcal{C}^1[0,\infty)$ we understand the class of functions defined on $[0,\infty)$ which belong to $\mathcal{C}^1(0,\infty)$ and have the right-continuous derivative at $0$.


\begin{proposition}
\label{Ex1}
	Let $X$ be a Markov process with the transition density $p$ with respect to the Lebesgue measure such that $t\mapsto p_t(x,y)\in\mathcal{C}^1[0,\infty)$ for $x,y \in E$.
Assume that $X$ has  the initial distribution with the strictly positive density $g$  (with respect to the Lebesgue measure).
Then $X$ has the FOED property and
\begin{equation}\label{eq:F} e^{\int_0^tF(u,x)du} = \frac{\int_Ep_t(z,x)g(z)dz}{g(x)},\ \textrm{for}\ t\geq 0.
\end{equation}
\end{proposition}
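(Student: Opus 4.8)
The plan is to produce $F$ as the logarithmic time-derivative of the one-dimensional density of $X_t$, after first recognizing that the asserted identity \eqref{eq:F} is exactly the condition needed for the FOED property to hold. Since $X$ is Markov with transition density $p$ and $X_0$ has density $g$, the random variable $X_t$ has the density
\[
q_t(x) := \int_E p_t(z,x)\,g(z)\,dz ,
\]
so that $\mathbb{E}f(X_t)=\int_E f(x)\,q_t(x)\,dx$ for every bounded Borel $f$. Using $g>0$, I would rewrite this as
\[
\mathbb{E}f(X_t)=\int_E f(x)\,\frac{q_t(x)}{g(x)}\,g(x)\,dx=\mathbb{E}\Big(f(X_0)\,\frac{q_t(X_0)}{g(X_0)}\Big).
\]
Comparing with \eqref{dfFED}, it then suffices to exhibit a Borel function $F$, locally integrable in its first argument, satisfying $e^{\int_0^t F(u,x)\,du}=q_t(x)/g(x)$; this simultaneously yields the FOED property and formula \eqref{eq:F}.

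To build such an $F$, I would set
\[
F(t,x):=\frac{\partial_t q_t(x)}{q_t(x)}=\partial_t \log q_t(x).
\]
The hypothesis $t\mapsto p_t(z,x)\in\mathcal{C}^1[0,\infty)$ is used here: differentiating under the integral sign gives $\partial_t q_t(x)=\int_E \partial_t p_t(z,x)\,g(z)\,dz$ and shows $t\mapsto q_t(x)$ is $\mathcal{C}^1$ on $[0,\infty)$, with $q_0(x)=g(x)$ by continuity at $t=0$ and the fact that $X_0$ has density $g$. The fundamental theorem of calculus then gives
\[
\int_0^t F(u,x)\,du=\log q_t(x)-\log q_0(x)=\log\frac{q_t(x)}{g(x)},
\]
and exponentiating delivers the desired equality.

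I expect the main obstacles to be analytic rather than structural. First, differentiation under the integral sign must be justified by a domination argument that is locally uniform in $t$, which is where the $\mathcal{C}^1[0,\infty)$ regularity, together with the integrability control provided by $g$, is essential. Second, and more delicately, the definition of $F$ and the logarithm require $q_t(x)>0$; this is precisely where strict positivity of $g$ enters, since $q_t(x)=\int_E p_t(z,x)g(z)\,dz$ is positive as soon as $p_t(\cdot,x)$ does not vanish $g$-almost everywhere, and one checks that $\{x:q_t(x)=0\}$ is Lebesgue-null (which is in any case forced by \eqref{dfFED} applied to its indicator). On that null set $F$ may be defined arbitrarily, consistent with the earlier remark that $F$ is determined only on the support of the law of $X_0$, and the identity \eqref{eq:F} is needed only almost everywhere, which is exactly what the displayed integral comparison requires. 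Finally, local integrability of $u\mapsto F(u,x)$ follows from its continuity on $[0,\infty)$ once positivity and the $\mathcal{C}^1$ property are in hand.
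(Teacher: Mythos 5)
Your proof is correct and takes essentially the same route as the paper: both define the density of $X_t$ as $q_t(x)=\int_E p_t(z,x)g(z)\,dz$ (the paper's $\phi(t,x)$), take $F$ to be its logarithmic time-derivative, and conclude by the fundamental theorem of calculus that $e^{\int_0^t F(u,x)\,du}=q_t(x)/g(x)$, which is exactly \eqref{dfFED}. Your explicit formula $F(t,x)=\partial_t\log q_t(x)$ is in fact what the paper intends (its displayed definition of $F$ omits the division by $\phi(t,x)$, evidently a typo, as the definition of $F(0,x)$ and equation \eqref{eq:tom8} show), and your remarks on positivity, differentiation under the integral sign, and local integrability make explicit points the paper passes over silently.
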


\begin{proof}
		Let $\phi(t,x)=\int_E p_t(z,x) g(z)dz,\ t>0$, $\phi(0,x)=g(x)$. Put
	\begin{align}\label{eq:tom7}
		F(t,x) = \frac{\partial \phi(t,x)}{\partial t}, \qquad t>0,
	\end{align}
and $F(0,x) = \frac{1}{g(x)}\frac{\partial \phi(t,x)}{\partial t}|_{t=0^+}$.	Then for any $t>0$ and $x\in E$ we have
	\begin{align}\label{eq:tom8}
	e^{\int_0^t F(u,x)du} = \frac{\phi(t,x)}{g(x)},
	\end{align}
which implies \eqref{dfFED},  so $X$ has the FOED property with function $F$ satisfying \eqref{eq:F}.
 \end{proof}

\begin{example} \label{Ex1a}
	 Let $X$ be as in Proposition \ref{Ex1} and  $X_0 = x \in E$. Let $s>0$ be such that the distribution of $X_s$ has the strictly positive density $g_s$.
For this $s>0$, we define a new process $Y_t = X_{t+s}$, $t\geq 0$. Then the initial distribution of $Y$ is $\gamma(dz) =  g_s(z)dz$.
It turns out that under the above assumptions the process $Y$ has the  FOED property. Indeed, for any bounded Borel $f$ on $E$ and $t> 0$
\begin{align*}
	\mathbb{E}f(Y_t) &= \mathbb{E}\mathbb{E}_{X_s}f(X_t) = \int_{E\times E}f(z)p_t(v,z)g_s(v)dvdz
	=  \int_{E}f(z)e^{\int_0^t F(u,z)du}g_s(z)dz,
\end{align*}
where, in line with the proof  of Proposition \ref{Ex1}, we take $F$ such that
\begin{align*}
e^{\int_0^t F(u,z)du} =  \frac{\int_E p_t(v,z) g_s(v)dv}{g_s(z)}.
\end{align*}
\end{example}

Other examples of processes with the FOED property will be given later.

 In the sequel we will use the following notation
\begin{align*}
\mu_t(dz) = \mathbb{P}(X_t\in dz), \quad t >0.
\end{align*}

The next proposition gives two immediate consequences of the FOED property.

\begin{proposition} \label{prop2.4}
Suppose that  $X$ has the FOED property. Then,

a)	 For $0\leq s<t$ and for every positive Borel function $f$
	\begin{equation*}
		\mathbb{E}f(X_t) = \mathbb{E} \big(f(X_0) \, e^{\int_s^t F(u,X_0)du} e^{\int_0^s F(u,X_0)du}\big) = \mathbb{E} \big(f(X_s) \, e^{\int_s^t F(u,X_s)du} \big).
	\end{equation*}
b)	The function  $F$ determines uniquely the density of $\frac{d\mu_t}{d\mu_s} $, namely
	\begin{align}\label{abscon}
	\mu_t(dz) = \mu_s(dz)e^{\int_s^tF(u,z)du},
	\end{align}
in particular
$\mu_t(dz) = \gamma(dz)e^{\int_0^tF(u,z)du}$,
so $E$ is the support of $\mu_t$ provided that $E$ is the support of $\mu_0$. This means the FOED property implies absolute continuity of one dimensional distributions of $X$.
\end{proposition}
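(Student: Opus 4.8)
The plan is to reduce everything to the defining FOED identity \eqref{dfFED}, applied at two different times and to a cleverly chosen integrand. For part a), the first equality is immediate: writing $\int_0^t = \int_0^s + \int_s^t$ in the exponent factorizes $e^{\int_0^t F(u,X_0)du} = e^{\int_s^t F(u,X_0)du}\,e^{\int_0^s F(u,X_0)du}$, and then \eqref{dfFED} at time $t$ identifies $\mathbb{E}f(X_t)$ with the resulting expectation.

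The crux is the second equality. First I would introduce the function $g(x) = f(x)\,e^{\int_s^t F(u,x)du}$. Since $u\mapsto F(u,x)$ is locally integrable and $F$ is jointly Borel, the map $x\mapsto e^{\int_s^t F(u,x)du}$ is a positive Borel function, so $g$ is again a positive Borel function and the FOED property may be applied to it, using the remark that \eqref{dfFED} extends to positive Borel test functions. Applying \eqref{dfFED} at time $s$ to $g$ yields $\mathbb{E}g(X_s) = \mathbb{E}\big(g(X_0)\, e^{\int_0^s F(u,X_0)du}\big)$; unwinding the definition of $g$ on both sides turns this into exactly $\mathbb{E}\big(f(X_s)\, e^{\int_s^t F(u,X_s)du}\big) = \mathbb{E}\big(f(X_0)\, e^{\int_s^t F(u,X_0)du}\, e^{\int_0^s F(u,X_0)du}\big)$, which is the middle term of part a).

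For part b), I would read the measure identity off part a). Rewriting $\mathbb{E}f(X_t) = \int_E f(z)\,\mu_t(dz)$ and $\mathbb{E}\big(f(X_s)\, e^{\int_s^t F(u,X_s)du}\big) = \int_E f(z)\, e^{\int_s^t F(u,z)du}\,\mu_s(dz)$, the second equality of part a) asserts that these two integrals coincide for every positive Borel $f$, which forces $\mu_t(dz) = e^{\int_s^t F(u,z)du}\,\mu_s(dz)$ as measures. Setting $s=0$ and $\mu_0 = \gamma$ gives the stated particular case. Finally, since the Radon--Nikodym density $e^{\int_s^t F(u,z)du}$ is strictly positive everywhere, $\mu_t$ and $\mu_s$ are mutually absolutely continuous and hence share the same support; in particular $\operatorname{supp}\mu_t = \operatorname{supp}\gamma = E$ whenever $E = \operatorname{supp}\mu_0$.

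The only genuinely delicate point is the measurability and finiteness of $x\mapsto \int_s^t F(u,x)\,du$ needed to legitimize treating $g$ as an admissible test function; this is where I would invoke joint Borel measurability of $F$ together with local integrability of $u\mapsto F(u,x)$. Beyond that, the argument is a direct double application of \eqref{dfFED} at the two times $s$ and $t$, and the passage from test-function identities to measure identities is routine.
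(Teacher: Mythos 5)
Your proof is correct and is essentially the argument the paper has in mind: the paper states this proposition as an ``immediate consequence'' of the FOED property and offers no written proof, and your double application of \eqref{dfFED} --- once at time $t$, and once at time $s$ to the positive Borel test function $f(\cdot)\,e^{\int_s^t F(u,\cdot)du}$, invoking the paper's remark that the defining identity extends from bounded to positive Borel functions --- is exactly that immediate argument. Your handling of the measurability of $x\mapsto\int_s^t F(u,x)\,du$, the passage from test functions to the measure identity in part b), and the support statement via mutual absolute continuity (the density $e^{\int_s^t F(u,z)du}$ being finite and strictly positive) are all sound.
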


Another immediate consequence of the  FOED property is

\begin{align}\label{cons}
\mathbb{E}\big(f(X_t)e^{-\int_0^tF(u,X_t)du}\big) = \mathbb{E}f(X_0).
\end{align}
Taking $f \equiv 1$ we obtain
	\begin{align*}
	\mathbb{E} e^{-\int_0^tF(u,X_t)du} = 1.
	\end{align*}

\begin{theorem} \label{m_FOD}
	 Assume that process  $X$ on $(\Omega,\mathcal{F},\mathbb{P})$ has the FOED property and the inital  distribution with strictly positive density.
	 The following statements are equivalent:\\
	(i) For each $\mathcal{F}^X_{\infty}$ measurable positive or bounded random variable $Z$ and for each $t>0$ and $x, y\in E$ it holds
	\begin{align*}
		\mathbb{E}(Z\circ \theta_t) = \mathbb{E}Ze^{\int_0^tF(u,X_0)du}, \qquad \mathbb{E}(Z\circ \theta_t|X_t = x, X_0 = y) = \mathbb{E}(Z\circ \theta_t|X_t = x).
	\end{align*}
	(ii) $X$ is a Markov process with respect to $\mathbb{F}^X$.
\end{theorem}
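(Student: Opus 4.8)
My plan is to prove the two implications separately. Throughout I use the structural identity $X_s\circ\theta_t=X_{t+s}$, which for a functional $Z$ of the coordinates gives $Z\circ\theta_{t+s}=(Z\circ\theta_t)\circ\theta_s$ and $f(X_0)\circ\theta_t=f(X_t)$, together with the standard fact that, for the augmented natural filtration, the time-homogeneous Markov property is equivalent to conditional independence of the past $\mathcal F^X_t$ and the shifted future given the present $X_t$, equivalently to the finite-dimensional laws being generated by a consistent transition mechanism.

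\textbf{(ii)$\Rightarrow$(i).} Assume $X$ is Markov and put $h(x)=\mathbb E(Z\mid X_0=x)$. The Markov property yields $\mathbb E(Z\circ\theta_t\mid\mathcal F^X_t)=h(X_t)$; since $X_0$ and $X_t$ are $\mathcal F^X_t$-measurable this already gives $\mathbb E(Z\circ\theta_t\mid X_t,X_0)=h(X_t)=\mathbb E(Z\circ\theta_t\mid X_t)$, the second identity. For the first, note that $h(X_0)=\mathbb E(Z\mid\mathcal F^X_0)$ and that $e^{\int_0^tF(u,X_0)du}$ is $\mathcal F^X_0$-measurable, so applying the FOED property to the (bounded or positive) Borel function $h$ gives
\[
\mathbb E(Z\circ\theta_t)=\mathbb E\,h(X_t)=\mathbb E\big(h(X_0)\,e^{\int_0^tF(u,X_0)du}\big)=\mathbb E\big(Z\,e^{\int_0^tF(u,X_0)du}\big).
\]
This direction is routine.

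\textbf{(i)$\Rightarrow$(ii).} Here the plan is to show that every finite-dimensional distribution of $X$ has the nested, kernel-type form characterizing a Markov process, arguing by induction on the number of time points, with the two identities in (i) supplying the inductive engine. Given a product $\prod_{i=1}^n f_i(X_{s_i})$ with $0<s_1<\dots<s_n$ and smallest time $a=s_1$, I first rewrite it as $\Psi\circ\theta_a$ with $\Psi=f_1(X_0)\prod_{i\ge2}f_i(X_{s_i-a})$ and apply the first identity to ``shift it down'' to time $0$:
\[
\mathbb E\Big(\prod_{i=1}^n f_i(X_{s_i})\Big)=\mathbb E\big(\Psi\,e^{\int_0^aF(u,X_0)du}\big)=\mathbb E\Big(f_1(X_0)\,e^{\int_0^aF(u,X_0)du}\prod_{i\ge2}f_i(X_{s_i-a})\Big).
\]
Now the remaining factor $\prod_{i\ge2}f_i(X_{s_i-a})$ is a functional of the shifted future after $0$, while $f_1(X_0)\,e^{\int_0^aF(u,X_0)du}$ depends only on $X_0$; the second identity then lets me detach the $X_0$-coordinate, replacing the interaction of $X_0$ with the future by a conditional expectation given the next present $X_{s_2-a}$. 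Iterating ``shift-down'' (first identity) and ``detach $X_0$'' (second identity) peels off the coordinates one at a time and produces exactly the Markov (Chapman--Kolmogorov) nesting, whence (ii).

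\textbf{Main obstacle.} The delicate point is that each detach step produces a conditional expectation anchored at a shifted time, and one must verify that the family of transition mechanisms obtained across the iteration is \emph{consistent and time-homogeneous}: the kernel read between two times should depend only on their difference, and the outcome must not depend on the order in which coordinates are stripped. This is where the hypotheses must be used most carefully. The absolute-continuity and uniqueness content of the FOED property (Proposition \ref{prop2.4}), the fact that $F$ is uniquely determined on the support of $\mu_0$, and the assumed strict positivity of the initial density are what I expect to pin down the stripped conditional laws and force the required compatibility. Establishing this compatibility, rather than the individual shift-and-detach manipulations, is the crux of the argument.
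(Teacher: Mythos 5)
Your (ii)$\Rightarrow$(i) direction is correct and is in substance the paper's own argument: the paper proves the first identity by an explicit induction on products $\prod_{i}f_i(X_{t_i+t})$, which is exactly an unpacked version of your ``standard functional extension of the Markov property, then one application of FOED''; the second identity is, as you say, immediate from conditional independence of past and future given the present.

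The gap is in (i)$\Rightarrow$(ii): you have correctly named the crux --- that the conditional mechanisms produced by your detach steps must be shown consistent and time-homogeneous --- but you have not supplied it, and this is precisely the non-routine content of the theorem rather than a verification one can defer. After your first shift-down you face objects of the form $G(x)=\mathbb{E}\big[W\circ\theta_v\,\big|\,X_v=x\big]$ anchored at a shifted time $v$, and to close the iteration you must identify them with $\mathbb{E}[W\,|\,X_0=x]=\mathbb{E}_xW$, uniformly in the anchor $v$. The paper establishes exactly this identity (its equation (z1)), and it is the heart of the proof; it does not follow from ``uniqueness of $F$'' or absolute continuity per se, but from a comparison argument your proposal never performs: compute $\mathbb{E}[f(X_{t+v})g(X_v)]$ in two ways --- once by conditioning on $X_v$ and transporting the resulting function of $X_v$ back to $X_0$ via the FOED identity, giving $\mathbb{E}\big[h(X_0,v)g(X_0)e^{\int_0^vF(u,X_0)du}\big]$ with $h(x,v)=\mathbb{E}[f(X_{t+v})|X_v=x]$, and once by applying the first identity in (i) to $Z=f(X_t)g(X_0)$, giving $\mathbb{E}\big[f(X_t)g(X_0)e^{\int_0^vF(u,X_0)du}\big]$. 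Since $g$ is arbitrary, $e^{\int_0^vF}>0$, and the initial law has strictly positive density (so conditioning on $X_v=x$ is meaningful on all of $E$), this forces $h(x,v)=\mathbb{E}[f(X_t)|X_0=x]$ a.e., i.e.\ time-homogeneity of the one-step conditional law; the same comparison on products plus a monotone class argument extends it to all path functionals $Z$. Only with this in hand can your detach step (which uses the second identity in (i) to drop the $X_0$-coordinate, $\mathbb{E}[\,\cdot\,|X_v=x_2,X_0=x_1]=\mathbb{E}[\,\cdot\,|X_v=x_2]$) be re-anchored at time zero and iterated to give $\mathbb{E}[f(X_t)|X_{t_1},\dots,X_{t_n}]=\mathbb{E}_{X_{t_n}}f(X_{t-t_n})$, which is the Markov property. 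As written, your proposal stops exactly where the real proof begins, so the hard implication remains unproven.
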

\begin{proof}
	$(i)\Rightarrow (ii)$
It is enough to prove  for arbitrary $n\in \mathbb{N}$, $0 \leq t_1 < t_2 < \ldots < t_n < t$ and a positive, Borel function $f$ on $E$ that
\begin{align}\label{ndim}
	\mathbb{E}[f(X_{t})|X_{t_1},\ldots,X_{t_n}] = \mathbb{E}_{X_{t_n}}f(X_{t-t_n}).
\end{align}
Indeed, from \eqref{ndim} using the monotone class theorem we see that
\begin{align*}
	\mathbb{E}_{X_{t_n}} [f(X_{t-t_n}) ]= \mathbb{E}[f(X_{t})|\mathcal{F}^X_{t_n}].
\end{align*}
Hence  for any  $A\in \mathcal{F}^X_{t_n}$
\begin{align*}
	\mathbb{E}_{X_0}[f(X_{t})1_A] &= \mathbb{E}[f(X_{t})1_A|X_0] = \mathbb{E}[\mathbb{E}[f(X_{t})1_A|X_0]|\mathcal{F}^X_{t_n}]= \mathbb{E}[1_A\mathbb{E}[f(X_{t})|\mathcal{F}^X_{t_n}]|X_0] \\
	&= \mathbb{E}[1_A\mathbb{E}_{X_{t_n}}[f(X_{t-t_n})]|X_0] = \mathbb{E}_{X_0}[1_A\mathbb{E}_{X_{t_n}}[f(X_{t-t_n})]],
\end{align*}
which  yields for every $x\in E$
\begin{align}\label{eq:family}
	\mathbb{E}_{x}[f(X_{t})1_A] = \mathbb{E}_{x}[1_A\mathbb{E}_{X_{t_n}}[f(X_{t-t_n})]].
\end{align}
Hence we obtain for $s\leq t$ that $$\mathbb{P}_x(X_t\in A|\mathcal{F}_s)=\mathbb{P}_{X_s}(X_{t-s}\in A) \quad \mathbb{P}_x \ a.s.$$
Integrating both sides with respect to the initial distribution yields
$$\mathbb{P}(X_t\in A|\mathcal{F}_s)=\mathbb{P}_{X_s}(X_{t-s}\in A) \quad \mathbb{P} \ a.s.$$
showing that $X$ is a Markov process.

So it is enough to prove \eqref{ndim}. We start with showing that for any  $\mathcal{F}^X_{\infty}$ measurable, positive or bounded random variable $Z$ we have
\begin{align}\label{z1}
	\mathbb{E}_{x}Z = \mathbb{E}[Z\circ\theta_v|X_v = x], \qquad x \in E,\ v\geq 0.
\end{align}
	 Let $f$ and $g$ be positive, Borel functions on $E$ and $t,v \in [0,\infty)$. Define
	 $h$ by formula
	 $$h(x,v,t + v) = \mathbb{E}(f(X_{t+v})|X_v = x).$$
	  The function $h$ is well defined since $E$ is, by assumption, the support of distribution of $X_u$ for each $ u\geq 0$. From the definition of FOED we have
\begin{align}\label{eq:tom1}
	\mathbb{E}[f(X_{t+v})g(X_v)] = \mathbb{E}[h(X_v,v,t+v)g(X_v)]= \mathbb{E}\big(h(X_0,v,t+v)g(X_0)e^{\int_0^vF(u,X_0)du}\big).
\end{align}
 Note that for $Z=f(X_{t})g(X_0)$ we have that $Z\circ \theta_v= f(X_{t+v})g(X_v)$. Thus, if $(i)$ holds, then
\begin{align}\label{eq:tom2}
	\mathbb{E}[f(X_{t+v})g(X_v)] &= \mathbb{E}\big(f(X_{t})g(X_0)e^{\int_0^vF(u,X_0)du}\big).
\end{align}
Comparing \eqref{eq:tom1} and \eqref{eq:tom2} we see that (almost surely) $f(X_{t})=h(X_0,v,t+v)$, so that
\begin{align*}
	\mathbb{E}[f(X_{t})|X_0 = x] = h(x,v,t+v),
\end{align*}
and so, actually, $ h(x,0,t)=h(x,v,t + v)$. Therefore
\begin{align}\label{1dim}
	\mathbb{E}_{x}f(X_{t}) = \mathbb{E}[f(X_{t+v})|X_v = x].
\end{align}

Proceeding in the similar way we show \eqref{z1}.
It is enough to prove \eqref{z1} for $Z = \Pi_{i=1}^n f_i(X_{t_i})$, where  $n \in \mathbb{N}$, $0 \leq v < t_1 < t_2 < \ldots < t_n $ and $f_i$, $i=1,\ldots,n$, are positive Borel functions,  and then use the monotone class theorem.
Let $\widehat{h}$ be defined by
\begin{align*}
	\widehat{h}(x,v,t_1,\ldots,t_n) = \mathbb{E}\Big[\Pi_{i=1}^nf_i(X_{t_i})|X_v = x\Big].
\end{align*}
 For a positive, Borel function $g$ we have
\begin{align}\label{eq:tom3}
	\mathbb{E}\Big[g(X_v)\Pi_{i=1}^nf_i(X_{t_i})\Big] &= \mathbb{E}\Big[g(X_v)\widehat{h}(X_v,v,t_1,\ldots,t_n)\Big]\\
	&= \mathbb{E}\Big[g(X_0)\widehat{h}(X_0,v,t_1,\ldots,t_n)e^{\int_0^vF(u,X_0)du}\Big], \nonumber
\end{align}	
where in the last equality we used the definition of FOED.
On the other hand by $(i)$
\begin{align}\label{eq:tom4}	
\mathbb{E}\Big[g(X_v)\Pi_{i=1}^nf_i(X_{t_i})\Big] &= \mathbb{E}\Big[\Big(g(X_0)\Pi_{i=1}^nf_i(X_{t_i-v})\Big)\circ\theta_v\Big]\\
 &= \mathbb{E}
 \Big[g(X_0)\mathbb{E}\Big[\Pi_{i=1}^nf_i(X_{t_i-v})|X_0\Big]e^{\int_0^vF(u,X_0)du}\Big]. \nonumber
\end{align}
So comparing \eqref{eq:tom3} and \eqref{eq:tom4}  we have for any $x\in E$
\begin{align*}
	\widehat{h}(x,v,t_1,\ldots,t_n) = \mathbb{E}\Big[\Pi_{i=1}^nf_i(X_{t_i-v})|X_0 = x\Big] = \mathbb{E}_{x}\Big[\Pi_{i=1}^nf_i(X_{t_i-v})\Big].
\end{align*}
As a result, recalling the definition of $\widehat{h}$ and using the last equality we obtain
\begin{align*}
	\mathbb{E}\Big[\Pi_{i=1}^nf_i(X_{t_i})|X_v = x \Big] = \mathbb{E}_{x}\Big[\Pi_{i=1}^nf_i(X_{t_i-v})\Big],
\end{align*}
and (\ref{z1}) follows  for $Z = \Pi_{i=1}^n f_i(X_{t_i})$.

We now show (\ref{ndim}). 
 Let
\begin{align*}
	h_1(t,t_1,\ldots,t_n, x_1,\ldots,x_n) = \mathbb{E}[f(X_{t})|X_{t_1}=x_1,\ldots,X_{t_n}=x_n] .
\end{align*}
We see from $(i)$ that for arbitrary positive Borel functions $f_1,\ldots,f_n$
\begin{align}\label{eq:tom5}
	\mathbb{E}&[f(X_{t})f_1(X_{t_1})\ldots f_n(X_{t_n})]
	= \mathbb{E}[h_1(t, t_1,\ldots,t_n, X_{t_1},\ldots,X_{t_n})f_1(X_{t_1})\ldots f_n(X_{t_n})]\\ \nonumber
	&= \mathbb{E}f_1(X_0)e^{\int_0^{t_1}F(X_0, u)du}h_1(t, t_1,\ldots,t_n, X_0,X_{t_2-t_1},\ldots,X_{t_n-t_1})\Pi_{i=2}^nf_i(X_{t_i-t_1})\\
	&= \mathbb{E}\Big[f_1(X_0)e^{\int_0^{t_1}F(X_0, u)du}\mathbb{E}_{X_0}\Big(h_1(t, t_1,\ldots,t_n, X_0,X_{t_2-t_1},\ldots,X_{t_n-t_1})\Pi_{i=2}^nf_i(X_{t_i-t_1})\Big)\Big], \nonumber
\end{align}
where in the second equality we used identity (\ref{z1}). On the other hand defining $\widetilde{h}$ by
\begin{align*}
	\widetilde{h}(t, t_1,t_2,\ldots,t_n,x) = \mathbb{E}\Big[f(X_t)\Pi_{i=2}^nf_i(X_{t_i})|X_{t_1}=x\Big]
\end{align*}
we have
\begin{align}\label{eq:tom6}
	\mathbb{E}[f(X_{t})f_1(X_{t_1})\ldots f_n(X_{t_n})] &= \mathbb{E}\Big[ f_1(X_{t_1})\widetilde{h}(t, t_1, t_2,\ldots,t_n,X_{t_1})\Big]\\
	&= \mathbb{E}\Big[ f_1(X_{0})\widetilde{h}(t, t_1, t_2,\ldots,t_n,X_{0})e^{\int_0^{t_1}F(X_0, u)du}\Big], \nonumber
\end{align}
where in the last equality we used the definition of FOED. As a result comparing \eqref{eq:tom5} and \eqref{eq:tom6} we obtain for any $x_1\in E$
\begin{align}\label{z2}
	\widetilde{h}(t, t_1, t_2,\ldots,t_n,x_1) = \mathbb{E}_{x_1}\Big(h_1(t,t_1,\ldots,t_n,x_1,X_{t_2-t_1},\ldots,X_{t_n-t_1})\Pi_{i=2}^nf_i(X_{t_i-t_1})\Big).
\end{align}
Observe now that  by (\ref{z1}), definition of $\widetilde{h}$,  and then (\ref{z2}) we have
\begin{align}
	& \mathbb{E}_{x_1}\Big[f(X_{t-t_1})\Pi_{i=2}^nf_i(X_{t_i - t_1})\Big]
	 = \mathbb{E}\Big[f(X_t)\Pi_{i=2}^nf_i(X_{t_i})|X_{t_1}=x_1\Big]= \widetilde{h}(t, t_1, t_2,\ldots,t_n,x_1) \notag \\
	&
\label{z3}
= \mathbb{E}_{x_1}\Big(h_1(t,t_1,\ldots,t_n,x_1,X_{t_2-t_1},\ldots,X_{t_n-t_1})\Pi_{i=2}^nf_i(X_{t_i-t_1})\Big).
\end{align}
We write the left-hand side of (\ref{z3}) as follows:
\begin{align*}
	\mathbb{E}_{x_1}\Big[f(X_{t-t_1})\Pi_{i=2}^nf_i(X_{t_i - t_1})\Big] = \mathbb{E}_{x_1}\Big(f_2(X_{t_2-t_1})\mathbb{E}_{x_1}\Big[f(X_{t-t_1})\Pi_{i=3}^nf_i(X_{t_i - t_1})|X_{t_2-t_1}\Big]\Big).
\end{align*}
On the other hand, the right-hand side of (\ref{z3}) is
\begin{align*}
	\mathbb{E}_{x_1}&\Big(h_1(t,t_1,\ldots,t_n,x_1,X_{t_2-t_1},\ldots,X_{t_n-t_1})\Pi_{i=2}^nf_i(X_{t_i-t_1})\Big)\\ &= \mathbb{E}_{x_1}\Big(f_2(X_{t_2-t_1})\mathbb{E}_{x_1}\Big[h_1(t,t_1,\ldots,t_n,x_1,X_{t_2-t_1},\ldots,X_{t_n-t_1})\Pi_{i=3}^nf_i(X_{t_i-t_1})|X_{t_2-t_1}\Big]\Big).
\end{align*}
So from (\ref{z3}) we have for any $x_2\in E$
\begin{align}\label{pom1}
\mathbb{E}_{x_1}&\Big[f(X_{t-t_1})\Pi_{i=3}^nf_i(X_{t_i - t_1})|X_{t_2-t_1} = x_2\Big]\\
 &= \mathbb{E}_{x_1}\Big[h_1(t,t_1,\ldots,t_n,x_1,x_2,\ldots,X_{t_n-t_1})\Pi_{i=3}^nf_i(X_{t_i-t_1})|X_{t_2-t_1} = x_2\Big].\notag
\end{align}
Observe that the last equality may be rewritten as
\begin{align*}
\mathbb{E}&\Big[\Big(f(X_{t-t_2})\Pi_{i=3}^nf_i(X_{t_i - t_2})\Big)\circ\theta_{t_2 - t_1}\big|X_{t_2-t_1} = x_2, X_0 = x_1\Big]\\
 &= \mathbb{E}\Big[\Big(h_1(t,t_1,\ldots,t_n,x_1,x_2,X_{t_3-t_2},\ldots,X_{t_n-t_2})\Pi_{i=3}^nf_i(X_{t_i-t_2})\Big)\circ\theta_{t_2 - t_1}\big|X_{t_2-t_1} = x_2, X_0 = x_1\Big],
\end{align*}
so by the second condition in (i) we get
\begin{align*}
\mathbb{E}&\Big[\Big(f(X_{t-t_2})\Pi_{i=3}^nf_i(X_{t_i - t_2})\circ\theta_{t_2 - t_1}\Big)\big|X_{t_2-t_1} = x_2\Big]\\
 &= \mathbb{E}\Big[\Big(h_1(t,t_1,\ldots,t_n,x_1,x_2,X_{t_3-t_2},\ldots,X_{t_n-t_2})\Pi_{i=3}^nf_i(X_{t_i-t_2})\Big)\circ\theta_{t_2 - t_1}\big|X_{t_2-t_1} = x_2\Big].
\end{align*}
Using  (\ref{z1}) we obtain
\begin{align*}
	\mathbb{E}_{x_2}\Big[&f(X_{t-t_2})\Pi_{i=3}^nf_i(X_{t_i - t_2})\Big]\\
	&= \mathbb{E}_{x_2}\Big[h_1(t,t_1,\ldots,t_n,x_1,x_2,X_{t_3-t_2},\ldots,X_{t_n-t_2})\Pi_{i=3}^nf_i(X_{t_i-t_2})\Big].
\end{align*}
Repeating the last procedure $(n-3)$ times, we obtain
\begin{align*}
	\mathbb{E}_{x_n}\Big[f(X_{t-t_n})\Big] = h_1(t,t_1,\ldots,t_n,x_1,x_2,\ldots,x_n),
\end{align*}
which after recalling the definition of $h$ yields
\begin{align*}
 \mathbb{E}_{X_{t_n}}\Big[f(X_{t-t_n})\Big]= \mathbb{E}[f(X_{t})|X_{t_1},\ldots,X_{t_n}].
\end{align*}
This proves \eqref{ndim}. The proof of implication $(i)\Rightarrow (ii)$ is finished.

\ \\
 $(ii)\Rightarrow (i)$ The second condition in (i) follows directly from the Markov property, so we only need to show the first one. Using the argument of monotone-class theorem it is enough to prove  that
\begin{align} \label{j9-1}
	 \mathbb{E} \big( \Pi_{i=1}^nf_i(X_{t_i+t}) \big)=\mathbb{E} \Big( \big ( \Pi_{i=1}^nf_i(X_{t_i}) \big)\circ \theta _t \Big ) = \mathbb{E} \big(\Pi_{i=1}^nf_i(X_{t_i})e^{\int_0^tF(X_0, u)du} \big)
\end{align}
 for arbitrary $0<t_1<\ldots<t_n$ and $f_1,\ldots,f_n$ positive Borel functions on $E$.

 The proof goes by induction. For $n=1$ the assertion follows directly by the definition of FOED. Indeed, using Markov and FOED properties we obtain that
\begin{align*}
	\mathbb{E}&f_1(X_{t_1+t}) = \mathbb{E} \mathbb{E}\big(f_1(X_{t_1+t})|\mathcal{F}^X_t\big) = 
	\mathbb{E}\mathbb{E}_{X_t}f_1(X_{t_1})\\ & = \mathbb{E}\big( e^{\int_0^tF(X_0, u)du}\mathbb{E}_{X_0}f_1(X_{t_1})\big)
	= \mathbb{E}\mathbb{E}\big( e^{\int_0^tF(u,X_0)du}f_1(X_{t_1})|X_0 \big) = \mathbb{E}e^{\int_0^tF(u,X_0)du}f_1(X_{t_1}).
\end{align*}
Assume that the assertion holds for $0<t_1<\ldots<t_{n-1}$ and $(n-1)$ positive, Borel functions. We conclude
from the Markov property that
\begin{align*}
	\mathbb{E}\Pi_{i=1}^nf_i(X_{t_i+t}) &= \mathbb{E}\mathbb{E}\Big(\Pi_{i=1}^nf_i(X_{t_i+t})|\mathcal{F}^X_{t_{n-1}+t}\Big) = \mathbb{E}\Big(\Pi_{i=1}^{n-1}f_i(X_{t_i+t})\mathbb{E}_{X_{t_{n-1}+t}}f_n(X_{t_n - t_{n-1}})\Big).
\end{align*}
From the induction assumption the last expression is equal to
\begin{align*}
	\mathbb{E}\Big(\Pi_{i=1}^{n-1}f_i(X_{t_i})\Big(\mathbb{E}_{X_{t_{n-1}}}f_n(X_{t_n - t_{n-1}})\Big)e^{\int_0^tF(u,X_0)du}\Big),
\end{align*}
which  from the Markov property is equal to
\begin{align*}
\mathbb{E}\Big(\Pi_{i=1}^{n-1}f_i(X_{t_i})\mathbb{E}\big (f_n(X_{t_n - t_{n-1}})|\mathcal{F}^X_{t_{n-1}}\big )e^{\int_0^tF(u,X_0)du}\Big) = \mathbb{E}\Big(\Pi_{i=1}^{n}f_i(X_{t_i})e^{\int_0^tF(u,X_0)du}\Big).
\end{align*}
This gives \eqref{j9-1}.
The proof   is complete.
\end{proof}

For an $\mathcal{F}^X_{\infty}$ measurable positive or bounded random variable $Z$ we
have the following representation theorem.

\begin{theorem}  \label{j-repr}
Assume that process  $X$ on $(\Omega,\mathcal{F},\mathbb{P})$ has the  FOED property and the inital  distribution with strictly positive density such that $\mathbb{E}|F(t,X_0)| < \infty$ for every $t\geq 0$. If $(X,\mathbb{P}_x)_{x\in E}$ is a Markov family with respect to $\mathbb{F}^X$, then
	for an $\mathcal{F}^X_{\infty}$-measurable positive or bounded random variable $Z$ we have
	\begin{align*}
		\mathbb{E} (Z\circ \theta_t) = \mathbb{E}Z + \mathbb{E}\int_0^t (Z\circ \theta_r) F(r,X_r) dr.
	\end{align*}
\end{theorem}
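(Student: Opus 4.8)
The plan is to reduce the identity to the elementary, pathwise fact that $r\mapsto \Phi_r:=e^{\int_0^r F(u,X_0)\,du}$ is absolutely continuous with derivative $F(r,X_0)\Phi_r$, and then to transport this back to the shifted process via Theorem \ref{m_FOD}. Since $(X,\mathbb{P}_x)_{x\in E}$ is assumed to be a Markov family, condition (ii) of Theorem \ref{m_FOD} holds, hence so does condition (i); in particular, for every $\mathcal{F}^X_\infty$-measurable positive or bounded random variable $W$ and every $r\ge 0$ we may use the identity
\[
\mathbb{E}(W\circ\theta_r)=\mathbb{E}\big(W\,\Phi_r\big),
\]
which I will refer to as $(\ast)$. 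Applying $(\ast)$ with $W=Z$ gives $\mathbb{E}(Z\circ\theta_t)=\mathbb{E}(Z\Phi_t)$, while $\Phi_0=1$ gives $\mathbb{E}Z=\mathbb{E}(Z\Phi_0)$. Thus the assertion is equivalent to
\[
\mathbb{E}\big(Z(\Phi_t-\Phi_0)\big)=\mathbb{E}\int_0^t (Z\circ\theta_r)\,F(r,X_r)\,dr .
\]

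To handle the left-hand side, I would invoke the local integrability of $u\mapsto F(u,X_0)$, which makes $r\mapsto\int_0^r F(u,X_0)\,du$ absolutely continuous, so that the fundamental theorem of calculus yields, for $\mathbb{P}$-a.e. $\omega$, the pathwise identity $\Phi_t-\Phi_0=\int_0^t F(r,X_0)\,\Phi_r\,dr$. Multiplying by $Z$, taking expectations, and interchanging expectation and the $dr$-integral by Fubini gives
\[
\mathbb{E}\big(Z(\Phi_t-\Phi_0)\big)=\int_0^t \mathbb{E}\big(Z\,F(r,X_0)\,\Phi_r\big)\,dr .
\]
The next step is to rewrite the integrand using $(\ast)$. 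For fixed $r$ the variable $W:=Z\,F(r,X_0)$ is $\mathcal{F}^X_\infty$-measurable, and since $X_0\circ\theta_r=X_r$ and $F(r,\cdot)$ is Borel we have $W\circ\theta_r=(Z\circ\theta_r)\,F(r,X_r)$. Because $F$ may change sign, I would apply $(\ast)$ separately to $Z\,F^{+}(r,X_0)$ and $Z\,F^{-}(r,X_0)$ and subtract, obtaining $\mathbb{E}\big(Z\,F(r,X_0)\,\Phi_r\big)=\mathbb{E}\big((Z\circ\theta_r)\,F(r,X_r)\big)$. Substituting this and interchanging expectation and integral once more produces exactly $\mathbb{E}\int_0^t (Z\circ\theta_r)F(r,X_r)\,dr$, which closes the argument.

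The main obstacle is the integrability bookkeeping that legitimizes the two Fubini interchanges together with the splitting $F=F^{+}-F^{-}$. Here the hypothesis $\mathbb{E}|F(t,X_0)|<\infty$ should be combined with the FOED relation $\mu_r(dz)=\gamma(dz)e^{\int_0^r F(u,z)\,du}$ from Proposition \ref{prop2.4}: applying $(\ast)$ to the positive variable $|F(r,X_0)|$ identifies $\mathbb{E}\big(|F(r,X_0)|\Phi_r\big)$ with $\mathbb{E}|F(r,X_r)|=\int_E|F(r,z)|\,\mu_r(dz)$, so one must verify that this quantity is finite and integrable in $r$ over $[0,t]$. For bounded $Z$ it suffices, after pulling out the bound, to control $\int_0^t\mathbb{E}|F(r,X_r)|\,dr$, and the positive case then follows by monotone approximation through $Z\wedge n$. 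Once this finiteness is secured, every interchange above is justified and the pathwise fundamental-theorem-of-calculus step carries the identity through.
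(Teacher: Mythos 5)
Your proposal is correct and follows essentially the same route as the paper's proof: both hinge on applying Theorem \ref{m_FOD} twice (once to $Z$ itself and once to $Z\,F(r,X_0)$ for fixed $r$), on the pathwise fundamental theorem of calculus identity $e^{\int_0^t F(u,X_0)du}-1=\int_0^t F(r,X_0)e^{\int_0^r F(u,X_0)du}dr$, and on a Fubini interchange of expectation with the $dr$-integral. Your explicit splitting $F=F^{+}-F^{-}$ and the truncation $Z\wedge n$ merely make rigorous the steps the paper performs implicitly (the paper applies Theorem \ref{m_FOD} to $\widetilde{Z}(r)=F(r,X_0)Z$, which need not be positive or bounded), so the substance is the same.
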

\begin{proof}
Using the assumption  $\mathbb{E}|F(t,X_0)| < \infty$  we conclude from Theorem \ref{m_FOD}  for  $r>0$ and
$\widetilde{Z}(r) = F(r,X_0)Z$ that
	\begin{align*}
		\mathbb{E}\big (\widetilde{Z}(r) \circ\theta_r\big ) = \mathbb{E}\widetilde{Z}(r)e^{\int_0^rF(X_0, u)du}.
	\end{align*}
Since $F(\cdot,y)$ is locally integrable, 	integrating both sides with respect to $r$ and using Fubini's theorem yield for any $t>0$
	\begin{align*}
		\mathbb{E}\int_0^t\widetilde{Z}(r) \circ\theta_r dr = \mathbb{E}\int_0^t\widetilde{Z}(r)e^{\int_0^rF(X_0, u)du}dr.
	\end{align*}
Hence, upon noting that $(\widetilde{Z}(r) \circ\theta_r)(\omega)=\widetilde{Z}(r)(\theta_r(\omega))=F(r,X_0(\theta_r(\omega)))Z(\theta_r(\omega))=(Z\circ\theta_r)(\omega) F(r,X_r)(\omega)$), we obtain
	\begin{align*}
		\mathbb{E}\int_0^t & (Z\circ\theta_r) F(r,X_r)dr  = \mathbb{E}\int_0^t F(r,X_0)Ze^{\int_0^r F(u,X_0)du}dr\\
		&= \mathbb{E}Z\Big[e^{\int_0^tF(u,X_0)du}-1\Big]=\mathbb{E}(Z \circ\theta_t) - \mathbb{E}Z,
	\end{align*}
	where in the last equality we used again Theorem \ref{m_FOD}, this time for $Z$.
\end{proof}

We end this section with a few examples of processes which are Markov and have the FOED property.

\begin{example} \label{GGex-j}
Let $B$ be the (real valued) standard Brownian motion process.

a)	(A Gauss-Gauss process with a parameter $a>0$) \\
Consider  $X_t = B_{t+a}$. Here we have $$\mu_0(dz)=\gamma(dz)=\frac{1}{\sqrt{2\pi a}}e^{-\frac{z^2}{2a}}dz\ \textrm{and}\  \mu_t(dz)=\frac{1}{\sqrt{2\pi (t+a)}}e^{-\frac{z^2}{2(t+a)}}dz. $$ Thus, using  Propositions \ref{Ex1} and \ref{prop2.4} we conclude that $X$ has the FOED property and the  function of
	evolution of distribution, say $F_a(u,z)$, satisfies
\begin{align}\label{GG}
	e^{\int_0^tF_a(u,z)du} = \sqrt{\frac{a}{a+t}}e^{\frac{z^2t}{2a(a+t)}}.
\end{align}
	 and thus it has the form
	\begin{align}\label{GFtz}
		F_a(t,z) = -\frac{1}{2(a+t)} + \Big(\frac{1}{2a} +\frac{1}{2(a+t)^2} \Big)z^2.
	\end{align}

b)
 Consider a model analogous to Gauss-Gauss, this time built on Ornstein-Uhlenbeck process. Namely, let $y\in\mathbb{R}$, $\lambda \geq 0$ and $Y$ be the unique strong solution of
	\begin{align*}
		Y_t = y + B_t - \lambda\int_0^tY_sds, \quad t\geq 0.
	\end{align*}
	For a brief characterization of such a process see \cite[App. I, point 24]{SB}. It is well known that
\[
 Y_t=ye^{-\lambda t}+ e^{-\lambda t}\int _0^te^{\lambda s}dB_s,\quad t\geq 0.
\]
Thus, $Y_t \sim N(\nu_t,\sigma^{2}_t)$, where $\nu_t=ye^{-\lambda t}$ and $\sigma^2_t=e^{-2\lambda t}\int_0^te^{2\lambda s}ds$. Now, let $X_t=Y_{t+a}$ for $a>0.$ Then,
$$\mu_0(dz)=\gamma(dz)=\frac{1}{\sqrt{2\pi \sigma^2_a}}e^{-\frac{(z-\nu_a)^2}{2\sigma^2_a}}dz\ \textrm{and}\  \mu_t(dz)=\frac{1}{\sqrt{2\pi \sigma^2_{t+a}}}e^{-\frac{(z-\nu_{t+a})^2}{2\sigma^2_{t+a}}}dz.$$
Consequently, using  Propositions \ref{Ex1} and \ref{prop2.4} we conclude that $X$ has the FOED property and the  function of
	evolution of distribution, say $F_a(u,z)$, satisfies
\begin{align}\label{OU}
	e^{\int_0^tF_a(u,z)du} = \sqrt{\frac{ \sigma^2_{a}}{ \sigma^2_{t+a}}}e^{-\frac{(z-\nu_{t+a})^2}{2\sigma^2_{t+a}}+\frac{(z-\nu_{a})^2}{2\sigma^2_{a}}}.
\end{align}
Clearly, with $y=\lambda =0$ the result \eqref{OU} agrees with \eqref{GG}. \\
In \cite[App. I, point 24]{SB} the transition density for $X$ is given relative to the speed measure of $X$. Accordingly, it can be shown that
\[\mu_t(dz)=\frac{1}{2}\frac{\sqrt{\lambda}}{\sqrt{2\pi \sinh(\lambda t)}} \exp\Big\{\frac{\lambda}{2}\Big[t + y^2 + z^2  -(y^2 + z^2)\coth(\lambda t) + \frac{2zy}{\sinh(\lambda t)}   \Big]\Big\}m(dz),\]
where $m(dz) = 2e^{-\lambda z^2}dz$ is the speed measure for $X$. Given that, the ''speed measure'' representation for the function of evolution of distribution takes the form
\begin{align*}
	e^{\int_0^tF(u,z)du} = \sqrt{\frac{\sinh(\lambda a)}{\sinh(\lambda (a+t))}}\exp\Big\{&\frac{\lambda}{2}\Big(t + (y^2+z^2)(\coth(\lambda a)-\coth(\lambda (a+t)))\\
	&+2yz\big(\frac{1}{\sinh(\lambda a)}- \frac{1}{\sinh(\lambda(a+t))}\big)\Big)\Big\}.
\end{align*}
The ''speed measure'' representation approach will be taken in part c) below as well.

c) Consider a model built on a squared Bessel process. Namely,  let $y\geq 0$, $\delta \geq 0$ and $Y$ be the unique strong solution of
\begin{align*}
	Y_t = y + 2\int_0^t\sqrt{Y_s}dB_s + \delta t, \quad t\geq 0.
\end{align*}
 For a brief characterization of such a process see \cite[App. I, point 23]{SB}. It is well known that
\begin{align*}
	\mu_t(dz) = \frac{1}{t} \big({yz}\big)^{\frac{1}{ 2}(1-\frac{\delta}{2})}\exp\Big(-\frac{y+z}{2t}\Big)I_{\frac{\delta}{2}-1}\Big(\frac{\sqrt{yz}}{t}\Big)m(dz),
\end{align*}
where $I_c$ denotes the modified Bessel function with index $c\in\mathbb{R}$. Then for fixed $a>0$ and $X_t = Y_{t+a}$, the process $X$ is Markov with the FOED property  and again by (\ref{abscon})
\begin{align*}
	e^{\int_0^tF(u,z)du} = \frac{a}{a+t}\exp\Big\{\frac{(y+z)t}{2a(a+t)}\Big\}\frac{I_{\frac{\delta}{2}-1}\Big(\frac{\sqrt{yz}}{t+a}\Big)}
{I_{\frac{\delta}{2}-1}\Big(\frac{\sqrt{yz}}{a}\Big)}.
\end{align*}

\end{example}

\section{Feller processes with the  FOED property}

{When we deal with Feller processes we will make standard assumption that $E$ is a LCCB space.} Let  $X$ is be a  c\'adl\'ag  Feller process (in the sense of \cite[Ch. III]{RY}) admitting function $F$ of evolution of distribution and with the infinite lifetime. By $\mathcal{C}_0(E)$ we denote  the space of bounced and continuous functions on $E$
vanishing at infinity, and by $\mathbb{D}_A\subset \mathcal{C}_0(E)$ we denote the domain of infinitesimal generator of $X$.
By $C^2_c(E)$ we denote  the space of continuous functions on $E$ with compact support and both first and second derivatives continuous.

 The next result shows how one-dimensional distributions of $X$ depend on $A$ and~$F.$
\begin{proposition} Let $f \in\mathbb{D}_A$. Then
\begin{align}\label{AF}
	\mathbb{E}f(X_t) = \mathbb{E}\Big(f(X_0) + Af(X_0)\int_0^te^{\int_0^sF(u,X_0)du}ds\Big), \quad t\geq 0.
\end{align}
\end{proposition}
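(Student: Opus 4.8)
The plan is to combine the classical Feller-semigroup (Dynkin-type) identity with the FOED property \eqref{dfFED}. Write $P_t g(x) = \mathbb{E}_x g(X_t)$ for the Feller semigroup of $X$. For $f \in \mathbb{D}_A$ the map $t \mapsto P_t f$ is differentiable in $\mathcal{C}_0(E)$ with $\frac{d}{dt} P_t f = P_t (Af)$, whence the integrated form
\begin{align*}
P_t f = f + \int_0^t P_s (Af)\, ds
\end{align*}
holds for every $t\geq 0$ (standard Hille--Yosida theory for strongly continuous contraction semigroups). I would then integrate this identity against the initial law $\gamma=\mathbb{P}(X_0\in\cdot)$ and use $\mathbb{E} g(X_t)=\int_E P_t g(x)\,\gamma(dx)$ (obtained by conditioning on $X_0$) to get
\begin{align*}
\mathbb{E} f(X_t) = \mathbb{E} f(X_0) + \int_0^t \mathbb{E}\,(Af)(X_s)\, ds,
\end{align*}
the interchange of $\int_E$ and $\int_0^t$ being licit since $|P_s(Af)(x)|\leq \|Af\|_\infty$ is bounded.

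The second ingredient is to rewrite $\mathbb{E}(Af)(X_s)$ via the FOED property. Because $f\in\mathbb{D}_A\subset\mathcal{C}_0(E)$ forces $Af\in\mathcal{C}_0(E)$, in particular $Af$ is a bounded Borel function, so \eqref{dfFED} applies to $Af$ and gives
\begin{align*}
\mathbb{E}(Af)(X_s) = \mathbb{E}\big((Af)(X_0)\, e^{\int_0^s F(u,X_0)\,du}\big).
\end{align*}
Substituting this into the previous display and applying Fubini's theorem to exchange $\int_0^t ds$ with the expectation yields
\begin{align*}
\mathbb{E} f(X_t) = \mathbb{E} f(X_0) + \mathbb{E}\Big(Af(X_0) \int_0^t e^{\int_0^s F(u,X_0)\,du}\,ds\Big),
\end{align*}
which is exactly \eqref{AF} after collecting the two terms under a single expectation.

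The only point requiring care---though it turns out to be mild---is justifying the Fubini step. Here I would note that $|Af(X_0)| \leq \|Af\|_\infty$ and that, taking $f\equiv 1$ in \eqref{dfFED}, one has $\mathbb{E}\,e^{\int_0^s F(u,X_0)\,du} = \mathbb{E}\,1(X_s) = 1$. Consequently
\begin{align*}
\int_0^t \mathbb{E}\big(|Af(X_0)|\, e^{\int_0^s F(u,X_0)\,du}\big)\,ds \leq \|Af\|_\infty \int_0^t \mathbb{E}\,e^{\int_0^s F(u,X_0)\,du}\,ds = \|Af\|_\infty\, t < \infty,
\end{align*}
so the integrand is absolutely integrable on $[0,t]\times\Omega$ and Fubini applies. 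I expect the main (and essentially only) obstacle to be bookkeeping: confirming that the generator identity $P_t f = f + \int_0^t P_s (Af)\,ds$ is available for all $f\in\mathbb{D}_A$ and that $Af$ genuinely qualifies as an admissible test function in \eqref{dfFED}. Both are standard for Feller semigroups on an LCCB space, the latter following from the fact that the generator maps $\mathbb{D}_A$ into $\mathcal{C}_0(E)$.
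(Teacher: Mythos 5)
Your proof is correct and essentially the same as the paper's: the paper derives the intermediate identity $\mathbb{E}f(X_t)=\mathbb{E}f(X_0)+\int_0^t\mathbb{E}\,Af(X_s)\,ds$ from the Dynkin martingale $f(X_t)-f(X_0)-\int_0^t Af(X_s)\,ds$ rather than from the integrated semigroup equation $P_tf=f+\int_0^t P_s(Af)\,ds$, but these are the same standard fact, and from there the two arguments coincide (FOED applied to the bounded function $Af$, then Fubini). Your explicit justification of the Fubini step via $\mathbb{E}\,e^{\int_0^s F(u,X_0)\,du}=1$ is, if anything, more careful than what the paper writes.
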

\begin{proof}  Since $f\in\mathbb{D}_A$ the process $f(X_t) - f(X_0) - \int_0^tAf(X_s)ds$ is a martingale, so
\begin{align*}
	\mathbb{E}f(X_t) = \mathbb{E}\Big(f(X_0) + \int_0^tAf(X_s)ds\Big).
\end{align*}
Because $Af\in \mathcal{C}_0(E)$  we conclude from Fubini's theorem and the FOED property that
 \begin{align*}
	\mathbb{E}f(X_t) &= \mathbb{E}f(X_0) + \int_0^t\mathbb{E}Af(X_s)ds = \mathbb{E}f(X_0) + \int_0^t\mathbb{E}Af(X_0)e^{\int_0^s F(u,X_0) du}ds\\
	&= \mathbb{E}\Big(f(X_0) + Af(X_0)\int_0^te^{\int_0^s F(u,X_0)du}ds\Big).
\end{align*}
The proof is complete.
\end{proof}

 If $t\mapsto F(t,z) \in \mathcal{C}^n(\mathbb{R}_+)$ for each $z\in E$, then we define a sequence of functions  $\mathcal{L}^m F:[0,\infty)\times E \rightarrow \mathbb{R}$ by
\begin{align}\label{eq:rec-L}
	\mathcal{L}^0F\equiv 1, \ \mathcal{L}^1F\equiv F, \ \mathcal{L}^{m+1}F = \frac{d}{dt}\mathcal{L}^m F + F \mathcal{L}^m F, \quad m\leq n.
\end{align}
Next, define
\begin{align*}
	\mathbb{D}^{(n)}_A = \{f\in \mathbb{D}_A: A^if \in \mathbb{D}_A \  {\rm for} \ i \leq n\},
\end{align*}
 and $A^{n+1}f = (A\circ A^n)f$ for $f\in \mathbb{D}^{(n)}_A $, $n\in\mathbb{N}$, with $A^1=A$.

\begin{proposition}\label{idan} Fix $n\in\mathbb{N}$.
Assume that the function $t\mapsto F(t,z) \in \mathcal{C}^n(\mathbb{R}_+)$ for each $z\in E$ and $\mathbb{E}|F(t,X_t)|<\infty$ for each $t\geq 0$. Let $f\in\mathbb{D}^{(n)}_A$. Then
\begin{align}\label{AnLn}
\mathbb{E} (A^nf(X_t)) = \mathbb{E}\Big(f(X_t)\mathcal{L}^nF(t,X_t)\Big)
\end{align}
 for every $t>0$.
\end{proposition}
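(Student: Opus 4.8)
The plan is to prove \eqref{AnLn} by induction on $n$, computing the derivative of $t\mapsto \mathbb{E}f(X_t)$ (and of its higher analogues) in two competing ways and matching the results. Two elementary facts drive the argument. First, a generator differentiation rule: for any $g\in\mathbb{D}_A$ one has $\frac{d}{dt}\mathbb{E}g(X_t)=\mathbb{E}(Ag(X_t))$. This follows from the martingale property already used in the proof of \eqref{AF}: since $g(X_t)-g(X_0)-\int_0^t Ag(X_s)\,ds$ is a martingale, $\mathbb{E}g(X_t)=\mathbb{E}g(X_0)+\int_0^t\mathbb{E}(Ag(X_s))\,ds$, and because $Ag\in\mathcal{C}_0(E)$ the strong continuity of the Feller semigroup makes $s\mapsto\mathbb{E}(Ag(X_s))$ continuous, so the right-hand side is $\mathcal{C}^1$ in $t$ with derivative $\mathbb{E}(Ag(X_t))$. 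Second, the recursion \eqref{eq:rec-L} is designed precisely so that
\[
\frac{\partial}{\partial t}\Big(\mathcal{L}^m F(t,z)\,e^{\int_0^t F(u,z)du}\Big)=\mathcal{L}^{m+1}F(t,z)\,e^{\int_0^t F(u,z)du},
\]
which is immediate from the product rule together with $\mathcal{L}^{m+1}F=\frac{d}{dt}\mathcal{L}^m F+F\mathcal{L}^m F$; in particular $\frac{\partial^n}{\partial t^n}e^{\int_0^t F(u,z)du}=\mathcal{L}^n F(t,z)\,e^{\int_0^t F(u,z)du}$.

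For the base case $n=1$ I would differentiate $t\mapsto\mathbb{E}f(X_t)$ in two ways. The generator rule gives $\frac{d}{dt}\mathbb{E}f(X_t)=\mathbb{E}(Af(X_t))$. On the other hand, part b) of Proposition \ref{prop2.4} yields the representation $\mathbb{E}f(X_t)=\int_E f(z)\,e^{\int_0^t F(u,z)du}\,\gamma(dz)$; differentiating under the integral and using $\mathcal{L}^1F=F$ gives $\frac{d}{dt}\mathbb{E}f(X_t)=\int_E f(z)F(t,z)e^{\int_0^t F(u,z)du}\gamma(dz)=\mathbb{E}(f(X_t)\mathcal{L}^1F(t,X_t))$. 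Equating the two expressions proves \eqref{AnLn} for $n=1$.

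For the inductive step, assume \eqref{AnLn} holds for $n$ and take $f\in\mathbb{D}^{(n+1)}_A$, so that $A^nf\in\mathbb{D}_A$ and $f\in\mathbb{D}^{(n)}_A$. Applying the inductive hypothesis to $f$ gives the identity $\Phi(t):=\mathbb{E}(A^nf(X_t))=\mathbb{E}(f(X_t)\mathcal{L}^nF(t,X_t))$ for all $t>0$. I would then differentiate $\Phi$ through each of its two representations. Differentiating the left-hand one by the generator rule applied to $g=A^nf\in\mathbb{D}_A$ gives $\Phi'(t)=\mathbb{E}(A^{n+1}f(X_t))$. Differentiating the right-hand one, $\Phi(t)=\int_E f(z)\mathcal{L}^nF(t,z)e^{\int_0^t F(u,z)du}\gamma(dz)$, under the integral and invoking the exponential identity above gives $\Phi'(t)=\int_E f(z)\mathcal{L}^{n+1}F(t,z)e^{\int_0^t F(u,z)du}\gamma(dz)=\mathbb{E}(f(X_t)\mathcal{L}^{n+1}F(t,X_t))$. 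Matching the two values of $\Phi'(t)$ yields \eqref{AnLn} for $n+1$, closing the induction.

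The main obstacle is the repeated differentiation under the integral sign on the FOED side, i.e. the interchange of $\frac{d}{dt}$ with $\int_E\cdots\gamma(dz)$. This requires a locally (in $t$) integrable domination of $f(z)\,\partial_t\big(\mathcal{L}^m F(t,z)e^{\int_0^t F(u,z)du}\big)$ against $\gamma$; here the boundedness of $f\in\mathcal{C}_0(E)$, the $\mathcal{C}^n$-regularity of $t\mapsto F(t,z)$, the local integrability of $F(\cdot,z)$, and the hypothesis $\mathbb{E}|F(t,X_t)|<\infty$ (which controls $\int_E|F(t,z)|\,\mu_t(dz)$ via \eqref{abscon}) are exactly what must be combined, together with the integrability of $\mathcal{L}^nF(t,X_t)$ implicitly needed for the right-hand side of \eqref{AnLn} to be finite. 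The generator differentiation rule, by contrast, is routine given the Feller assumption.
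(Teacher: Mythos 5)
Your proof is correct at the same level of rigor as the paper's, and it rests on the same skeleton: induction on $n$, computing a time derivative of an expectation in two competing ways --- once through the generator, once through the FOED representation combined with the recursion \eqref{eq:rec-L} --- and equating the results. The differences lie in which lemmas carry the two steps, and they are worth recording. For the base case the paper does not differentiate the FOED integral directly: it invokes Theorem \ref{j-repr} with $Z=f(X_0)$ to obtain $\mathbb{E}f(X_t)=\mathbb{E}f(X_0)+\mathbb{E}\int_0^tf(X_s)F(s,X_s)\,ds$, compares this with the Dynkin formula $\mathbb{E}f(X_t)=\mathbb{E}f(X_0)+\mathbb{E}\int_0^tAf(X_s)\,ds$, and differentiates in $t$. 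Since Theorem \ref{j-repr} rests on Theorem \ref{m_FOD}, this route implicitly imports the Markov-family hypothesis and the condition $\mathbb{E}|F(t,X_0)|<\infty$, neither of which appears in the statement of the proposition; your direct differentiation of $\int_Ef(z)e^{\int_0^tF(u,z)du}\gamma(dz)$ bypasses that machinery, so your argument is more self-contained and economical in hypotheses. In the inductive step the paper works ``downward'': it applies the base case to $A^{n-1}f$ to write $\mathbb{E}A^nf(X_t)=\mathbb{E}\big(A^{n-1}f(X_t)F(t,X_t)\big)$, pulls this back to $X_0$ by FOED, recognizes the result as $\frac{d}{dt}\mathbb{E}\big(e^{\int_0^tF(u,X_0)du}A^{n-1}f(X_0)\big)$, and only then inserts the induction hypothesis; you work ``upward'', differentiating the induction-hypothesis identity on both sides, the left side via the semigroup rule $\frac{d}{dt}\mathbb{E}g(X_t)=\mathbb{E}(Ag(X_t))$ for $g=A^nf\in\mathbb{D}_A$. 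These are reorganizations of the same identities: your version buys a cleaner logical structure and weaker standing assumptions, while the paper's version needs only the martingale (Dynkin) property of the semigroup and never the strong-continuity differentiation rule you invoke. Finally, both proofs share the same soft spot, which you at least name explicitly: interchanging $\frac{d}{dt}$ with the $\gamma$-integral requires a dominating function valid on a $t$-neighborhood, and the pointwise hypothesis $\mathbb{E}|F(t,X_t)|<\infty$ does not by itself supply one --- the paper's ``we may differentiate under expectation'' glosses over exactly the same point.
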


\begin{proof} The proof goes  by induction.
For $n=1$ using Theorem \ref{j-repr} for $Z = f(X_0)$ yields
\begin{align*}
	\mathbb{E}f(X_t) = \mathbb{E}f(X_0) + \mathbb{E}\int_0^tf(X_s)  F(s,X_s) ds.
\end{align*}
On the other hand we have for $f\in C^2_c(E)$ (see \cite[Remark p.295]{RY})
	\begin{align*}
	\mathbb{E}f(X_t) = \mathbb{E}f(X_0) + \mathbb{E}\int_0^tAf(X_s)ds.
	\end{align*}
Comparing both equalities, using Fubini's theorem and taking derivative with respect to $t$ yield the assertion.

Assume now that identity (\ref{AnLn})  is true for any $k\leq n -1$. We have
\begin{align} \label{j13-1}
	\mathbb{E}A^nf(X_t) &= \mathbb{E}A(A^{n-1}f(X_t)) =  \mathbb{E}F(t,X_t)A^{n-1}f(X_t) \notag
	\\
	&= \mathbb{E}F(t,X_0)e^{\int_0^tF(u,X_0)du}A^{n-1}f(X_0),
\end{align}
where in the last equality we used the assumption that $X$ has the  FOED property.

Next,   by assumption $\mathbb{E}|F(0,X_0)|<\infty$  and  $f\in\mathbb{D}^{(n)}_A$, so $z\mapsto A^{n-1}f(z)$ is bounded and we may differentiate with respect to $t$ under expectation to conclude that RHS of \eqref{j13-1} is equal to
 \begin{align*}
	\mathbb{E}F(t,X_0)e^{\int_0^tF(u,X_0)du}A^{n-1}f(X_0) = \frac{d}{dt}\Big(\mathbb{E}e^{\int_0^tF(u,X_0)du}A^{n-1}f(X_0)\Big).
\end{align*}
  By the  FOED property and the induction assumption we obtain
\begin{align*}
&\frac{d}{dt}\Big(\mathbb{E}e^{\int_0^tF(u,X_0)du}A^{n-1}f(X_0)\Big) = \frac{d}{dt}\Big(\mathbb{E}A^{n-1}f(X_t)\Big) = \frac{d}{dt}\mathbb{E}\Big(f(X_t)\mathcal{L}^{n-1}F(t,X_t)\Big) \\
& = \frac{d}{dt}\mathbb{E}\Big(f(X_0)e^{\int_0^tF(u,X_0)du}\mathcal{L}^{n-1}F(t,X_0)\Big) = \mathbb{E}\Big(f(X_0)\frac{d}{dt}\Big(e^{\int_0^tF(u,X_0)du}\mathcal{L}^{n-1}F(t,X_0)\Big)\Big),
\end{align*}
which is equal to
\begin{align*}
 \mathbb{E}\Big(f(X_0)\Big(F(t,X_0)e^{\int_0^tF(u,X_0)du}\mathcal{L}^{n-1}F(t,X_0) +e^{\int_0^tF(u,X_0)du}\frac{d}{dt}\mathcal{L}^{n-1}F(t,X_0) \Big)\Big).
\end{align*}
Thus, using the above as well as  \eqref{eq:rec-L} and the  FOED property, we obtain
\begin{align*}
\mathbb{E}&A^nf(X_t)\\
&=\mathbb{E}\Big(f(X_0)\Big(F(t,X_0)e^{\int_0^tF(u,X_0)du}\mathcal{L}^{n-1}F(t,X_0)+e^{\int_0^tF(u,X_0)du}\frac{d}{dt}\mathcal{L}^{n-1}F(t,X_0) \Big)\Big)\\
&=\mathbb{E}f(X_0)\mathcal{L}^{n}F(t,X_0)e^{\int_0^tF(u,X_0)du} = \mathbb{E}\Big(f(X_t)\mathcal{L}^nF(t,X_t)\Big).
\end{align*}
The proof is complete.
\end{proof}

\begin{example}\label{GGex}
	 Consider the Gauss-Gauss process from Example \ref{GGex-j} with parameter $a>0$. {We find a description of the functions $\mathcal{L}^m F$ in terms of the density of $X_t$.}

The assumptions of  Proposition \ref{idan} are satisfied.  Let $g_{t+a}$ denote the density of $B_{t+a}$. Using formula (\ref{AnLn}) for $f(x) = e^{-\lambda x}$ with $\lambda > 0$ and $X_t = B_{t+a}$,  we obtain
\begin{align*}
	\frac{\lambda^{2n}}{2^n}\mathbb{E}e^{-\lambda B_{t+a}} = \mathbb{E}e^{-\lambda B_{t+a}}\mathcal{L}^nF(t,B_{t+a}), \qquad t> 0.
\end{align*}
Let now $f(s)=1_{\{s>0\}}\frac{s^{2n-1}}{(2n-1)!}$ and $h(z)=\mathcal{L}^nF(t,z)g_{t+a}(z)$. Hence, since $\lambda^{-2n} = \int_0^{\infty}e^{-\lambda s}\frac{t^{2n-1}}{(2n-1)!}ds=\int_{-\infty}^{\infty}e^{-\lambda s}f(s)ds$, then, using the convolution theorem for  Laplace transforms, we obtain the following chain of equalities for each $\lambda > 0$,
\begin{align*}
		\mathbb{E}e^{-\lambda B_{t+a}} &= 	2^n	\lambda^{-2n}	\mathbb{E}e^{-\lambda B_{t+a}}\mathcal{L}^nF(t,B_{t+a})\\
		&= 2^n\Big(\int_{-\infty}^{\infty}e^{-\lambda s}f(s)ds\Big)\Big(\int_{-\infty}^{\infty}e^{-\lambda z}g(z)dz\Big)\\
		&= 2^n\int_{-\infty}^{\infty}e^{-\lambda v}\left(\int_{-\infty}^\infty f(v-z)h(z)dz\right )dv
\\
		&= 2^n\int_{-\infty}^{\infty}e^{-\lambda v}\left(\frac{1}{\Gamma(2n)}\int_{-\infty}^v(v-z)^{2n-1}g_{t+a}(z)\mathcal{L}^nF(t,z)dz\right )dv,
\end{align*}
so that we obtain an integral equation for $\mathcal{L}^nF$
\begin{align*}
	g_{t+a}(v) = \frac{2^n}{\Gamma(2n)}\int_{-\infty}^v(v-z)^{2n-1}g_{t+a}(z)\mathcal{L}^nF(t,z)dz.
\end{align*}
Using the Leibnitz rule iteratively we can compute $\mathcal{L}^nF(t,z)$. For example, for $n=1$ we get
\begin{align*}
\mathcal{L}^1F(t,v)=\frac{1}{2}\frac{g''_{t+a}(v)}{g_{t+a}(v)}.
\end{align*}
\qed
\end{example}

\section{Bridge operators}

In order to study the  finite dimensional distributions of a Markov process $X$ with the initial distribution $\gamma$ having $E$ as the support and  admitting the FOED property, we introduce a bridge operator.

Towards this end we consider a Banach space $L^1(\gamma):= L^1(E, \mathcal{B}(E), \gamma)$ with the norm  given by $\left\|h\right\| = \mathbb{E}|h(X_0)|$.
  For  $t\geq 0$ and for $f\in L^1(\gamma)$ we denote
	$$\Lambda_tf=\mathbb{E} (f(X_0)\big|X_t). $$
For each $t\geq 0$ the operator $f\mapsto \Lambda_tf$ is a linear contraction  on $L^1(\gamma)$ as well as on $L^2(\gamma)$.
We can treat 	$\Lambda_tf$ as a function on $E$, namely
$\Lambda_tf(z) =h(z), z \in E$, where	$h$ is a Borel function such that $\mathbb{E}\big(f(X_0)|X_t\big)=h(X_t)$,
so
 $$\Lambda_tf(z) = \mathbb{E} (f(X_0)\big|X_t=z). $$
 We will always assume that $(t,z)\mapsto \Lambda_t(f)(z)$ is a measurable function.
Observe also that for a  Markov process $X$ with the FOED property, with the initial distribution $\gamma(dz)=g(z)dz$ and with the transition density $p_t(\cdot,\cdot)$ we have a nice formula for  $\Lambda_tf$
	\begin{align}\label{Lmt}
		\Lambda_t f(z) =\frac{\int_Ef(v)p_t(v,z)\mu_0(dv)}{\mu_t(dz)/dz}= \frac{\int_Ef(v)p_t(v,z)\gamma(dv)}{g(z)e^{\int_0^tF(u,z)du}}, \quad f \in L^1(\gamma),
	\end{align}
	where for the second equality we used \eqref{abscon}. So in this case clearly  $(t,z)\mapsto\Lambda_t f(z) $ is measurable.
We will call  the operator $\Lambda_t$ the \textit{bridge operator}.

We will demonstrate below that the finite dimensional distributions of $X$ can be described in terms of the bridge operators $(\Lambda_t)_{t>0}$ and the FOED function. This gives a new algorithmic approach to the computational problems related to finite dimensional distributions of a Markov process. The algorithm uses the initial distribution $\gamma$ and the form of bridge operator given by  (\ref{Lmt}). The first step to achieve this goal is is the theorem below.

\begin{theorem} \label{mdim} Assume that $X$ is a Markov process having the FOED property. Let $n\in\mathbb{N}$ and $0<t_1<\ldots<t_n$. Assume that $f_0,f_1,\ldots,f_n$ are Borel and bounded functions on $E$. Then
\begin{align}\label{findist}
	&\mathbb{E}f_0(X_0)f_1(X_{t_1})\ldots f_n(X_{t_n}) = \mathbb{E}\Big(f_n(X_0)e^{\int_0^{t_n-t_{n-1}}F(u,X_0)du}\\
	&\quad \times \Lambda_{t_n-t_{n-1}}\Big(f_{n-1}(\cdot)e^{\int_0^{t_{n-1}-t_{n-2}}F(u,\cdot)du}\Lambda_{t_{n-1}-t_{n-2}}\Big(\cdots\Lambda_{t_1}f_0\Big)(\cdot)\Big)(X_0)\Big).\notag
\end{align}
\end{theorem}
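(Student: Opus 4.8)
The plan is to prove the statement by induction on the number of ``later'' time points, peeling off the earliest one at each stage via a single one‑step reduction identity. To make the induction close I would prove the slightly more general claim in which the initial function is only required to lie in $L^1(\gamma)$ (while $f_1,\dots,f_n$ stay bounded); this generality is forced on us, because each reduction produces a new initial factor carrying the unbounded exponential weight $e^{\int_0^{\cdot}F}$, so boundedness is not preserved but membership in $L^1(\gamma)$ is.

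The heart of the argument is the reduction identity: for $h\in L^1(\gamma)$, bounded Borel $g_1,\dots,g_m$ and $0<\tau_1<\dots<\tau_m$,
\begin{align*}
\mathbb{E}\Big(h(X_0)\prod_{j=1}^m g_j(X_{\tau_j})\Big)
= \mathbb{E}\Big(\widetilde h(X_0)\prod_{j=2}^m g_j(X_{\tau_j-\tau_1})\Big),
\qquad
\widetilde h = g_1\,(\Lambda_{\tau_1}h)\, e^{\int_0^{\tau_1}F(u,\cdot)du}.
\end{align*}
To establish it I would first write $\prod_{j=1}^m g_j(X_{\tau_j})=W\circ\theta_{\tau_1}$ with $W=g_1(X_0)\prod_{j\ge 2}g_j(X_{\tau_j-\tau_1})$, and use the Markov property $\mathbb{E}(W\circ\theta_{\tau_1}\mid\mathcal{F}^X_{\tau_1})=\mathbb{E}_{X_{\tau_1}}W=g_1(X_{\tau_1})\chi(X_{\tau_1})$, where $\chi(z)=\mathbb{E}_z\prod_{j\ge 2}g_j(X_{\tau_j-\tau_1})$; pulling out the $\mathcal{F}^X_{\tau_1}$‑measurable factor $h(X_0)$ reduces the left side to $\mathbb{E}\big(h(X_0)g_1(X_{\tau_1})\chi(X_{\tau_1})\big)$. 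Conditioning now on $X_{\tau_1}$ replaces $h(X_0)$ by $\Lambda_{\tau_1}h(X_{\tau_1})$, by the very definition of the bridge operator. Applying the FOED property then transports the whole integrand from $X_{\tau_1}$ back to $X_0$ at the price of the weight $e^{\int_0^{\tau_1}F(u,X_0)du}$, and a final use of the tower property, folding $\chi(X_0)=\mathbb{E}\big(\prod_{j\ge 2}g_j(X_{\tau_j-\tau_1})\mid X_0\big)$ back into the expectation, produces exactly $\widetilde h$.

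With the reduction identity in hand the theorem follows by induction on $m$. The base case $m=1$ is the reduction identity with an empty trailing product, and it reproduces the stated formula for one later time, namely $\mathbb{E}\big(h(X_0)g_1(X_{\tau_1})\big)=\mathbb{E}\big(g_1(X_0)e^{\int_0^{\tau_1}F(u,X_0)du}\Lambda_{\tau_1}h(X_0)\big)$. For the inductive step I would apply the reduction identity to pass from the $m$‑point problem with initial function $h$ to the $(m-1)$‑point problem with initial function $\widetilde h$ and shifted tenor $\tau_2-\tau_1<\dots<\tau_m-\tau_1$; since the consecutive gaps satisfy $(\tau_j-\tau_1)-(\tau_{j-1}-\tau_1)=\tau_j-\tau_{j-1}$ and the innermost bridge becomes $\Lambda_{\tau_2-\tau_1}\widetilde h=\Lambda_{\tau_2-\tau_1}\big(g_1 e^{\int_0^{\tau_1}F(u,\cdot)du}\Lambda_{\tau_1}h\big)$, the nested expression delivered by the induction hypothesis unwinds precisely into \eqref{findist}. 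Specializing to $m=n$, $h=f_0$, $g_j=f_j$, $\tau_j=t_j$ (with the convention $t_0=0$) gives the claim.

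I expect the only real obstacle to be the measure‑theoretic bookkeeping, concentrated in two points rather than in any conceptual difficulty. First, the FOED identity is stated for bounded integrands, yet after one reduction the factor $\Lambda_{\tau_1}h$ is merely $\mu_{\tau_1}$‑integrable; I would remove this gap by invoking Proposition \ref{prop2.4}(b), which reads $\mu_t(dz)=\gamma(dz)e^{\int_0^tF(u,z)du}$ as a genuine density relation and hence guarantees $\mathbb{E}G(X_t)=\mathbb{E}\big(G(X_0)e^{\int_0^tF(u,X_0)du}\big)$ for every $G\in L^1(\mu_t)$. Second, I must verify that the recursion never leaves $L^1(\gamma)$: using the same density relation together with the conditional Jensen inequality one gets $\mathbb{E}|\widetilde h(X_0)|=\int|g_1\,\Lambda_{\tau_1}h|\,d\mu_{\tau_1}\le \|g_1\|_\infty\,\mathbb{E}|\Lambda_{\tau_1}h(X_{\tau_1})|\le \|g_1\|_\infty\,\mathbb{E}|h(X_0)|$, so each intermediate initial function is again admissible and all expectations along the chain are finite; in particular $\mathbb{E}\,e^{\int_0^{\tau_1}F(u,X_0)du}=1$ (take $f\equiv 1$ in \eqref{dfFED}) keeps the weights normalized throughout.
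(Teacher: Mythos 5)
Your proof is correct, but it runs the induction in the opposite direction from the paper's. The paper peels off the \emph{latest} time point: by the Markov property it replaces $f_n(X_{t_n})$ with $\mathbb{E}_{X_{t_{n-1}}}f_n(X_{t_n-t_{n-1}})$, absorbs this (still bounded) factor into $f_{n-1}$, applies the induction hypothesis for $n-1$ bounded functions, and only afterwards reinstates $f_n$, conditions on $X_{t_n-t_{n-1}}$ to produce $\Lambda_{t_n-t_{n-1}}$, and applies FOED. You instead peel off the \emph{earliest} time point via your one-step reduction identity, which is what forces the strengthened induction hypothesis $h\in L^1(\gamma)$, since $\widetilde h = g_1\,(\Lambda_{\tau_1}h)\,e^{\int_0^{\tau_1}F(u,\cdot)du}$ is no longer bounded; your unwinding of the recursion does reproduce \eqref{findist} exactly. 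Both schemes use the same toolkit (Markov property expressed through $\mathbb{E}_x$, the bridge operator, FOED, the tower property), so the difference is structural rather than conceptual. What your version buys: the integrability bookkeeping is explicit — in particular you correctly observe that the FOED identity must be applied to merely $\mu_t$-integrable functions and justify this through Proposition \ref{prop2.4}(b); the paper needs exactly the same extension in the last step of its induction (where FOED is applied to $f_n\cdot\Lambda_{t_n-t_{n-1}}(\cdots)$, which carries unbounded exponential weights) but passes over it silently. What the paper's version buys: its induction hypothesis never leaves the class of bounded functions, and it only needs the one-dimensional form of the identity $\mathbb{E}(Z\circ\theta_t\mid\mathcal{F}^X_t)=\mathbb{E}_{X_t}Z$, whereas your reduction invokes it for multi-time functionals $W$; that identification is standard for a time-homogeneous Markov process once one notes $\mu_t\ll\gamma$ (which FOED supplies via Proposition \ref{prop2.4}(b)), but it deserves the same explicit care you gave the $L^1$ issues.
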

\begin{proof}
Notice that each successive component of
the RHS of (\ref{findist}) is well defined since $f_0,\ldots,f_n$ are Borel  bounded functions on $E$.  Let $c$ be such that $\max_{i\leq n}\|f_i\|_\infty \leq c$.

The proof of (\ref{findist}) goes by induction.   For $n =1$, we see that she function $f_1(\cdot)e^{\int_0^{t_1}F(u,\cdot)du}\Lambda_{t_1}f_0(\cdot) \in L^1(\gamma)$ since
\begin{align*}
	\mathbb{E}\Big|f_1(X_0)e^{\int_0^{t_1}F(u,X_0)du}\Lambda_{t_1}f_0(X_0)\Big|\leq c^2 \mathbb{E}e^{\int_0^{t_1}F(u,X_0)du} = c^2,
\end{align*} and then using the definitions of $\Lambda_t$ and having the FOED property   we obtain
\begin{align*}
\mathbb{E}f_0(X_0)f_1(X_{t_1}) = \mathbb{E}f_1(X_{t_1})\Lambda_{t_1}f_0(X_{t_1}) = \mathbb{E}f_1(X_{0})e^{\int_0^{t_{1}}F(u,X_0)du}\Lambda_{t_1}f_0(X_{0}).
\end{align*}

For the induction step suppose that identity (\ref{findist}) holds for $n-1$ and for bounded Borel functions $f_0,\ldots,f_{n-1}$. Using the Markov property and the induction assumption we obtain the following chain of equalities
\begin{align*}
	&\mathbb{E}f_0(X_0)f_1(X_{t_1})\ldots f_n(X_{t_n}) = \mathbb{E}\Big(f_0(X_0)f_1(X_{t_1})\ldots \ \mathbb{E}_{X_{t_{n-1}}}f_n(X_{t_n-t_{n-1}})\Big)\\
	&= \mathbb{E}\Big(f_{n-1}(X_0) \mathbb{E}_{X_{0}}f_n(X_{t_n-t_{n-1}})e^{\int_0^{t_{n-1}-t_{n-2}}F(u,X_0)du}\Lambda_{t_{n-1}-t_{n-2}}\Big(\cdots\Lambda_{t_1}f_0\Big)(X_0)\Big)\\
	&= \mathbb{E}\Big(f_{n-1}(X_0) f_n(X_{t_n-t_{n-1}})e^{\int_0^{t_{n-1}-t_{n-2}}F(u,X_0)du}\Lambda_{t_{n-1}-t_{n-2}}\Big(\cdots\Lambda_{t_1}f_0\Big)(X_0)\Big)\\
	&= \mathbb{E}\Big(f_n(X_{t_n-t_{n-1}})\\
	&\qquad\times \mathbb{E}\Big(f_{n-1}(X_0) e^{\int_0^{t_{n-1}-t_{n-2}}F(u,X_0)du}\Lambda_{t_{n-1}-t_{n-2}}\Big(\cdots\Lambda_{t_1}f_0\Big)(X_0)\Big|X_{t_n-t_{n-1}}\Big)\Big)\\
	&= \mathbb{E}\Big(f_n(X_{t_n-t_{n-1}})\\
	&\qquad  \times \Lambda_{t_n-t_{n-1}}\Big(f_{n-1}(\cdot) e^{\int_0^{t_{n-1}-t_{n-2}}F(u,\cdot)du}\Lambda_{t_{n-1}-t_{n-2}}\Big(\cdots\Lambda_{t_1}f_0\Big)(\cdot)\Big)\Big(X_{t_n-t_{n-1}}\Big)\Big).
\end{align*}
By the FOED property the last expression is equal to
\begin{align*}
\mathbb{E}\Big(f_n(X_{0})&e^{\int_0^{t_{n}-t_{n-1}}F(u,X_0)du}\\
&\quad \times\Lambda_{t_n-t_{n-1}}\Big(f_{n-1}(\cdot) e^{\int_0^{t_{n-1}-t_{n-2}}F(u,\cdot)du}\Lambda_{t_{n-1}-t_{n-2}}\Big(\cdots\Lambda_{t_1}f_0\Big)(\cdot)\Big)\big(X_0\big)\Big),
\end{align*}
and the assertion follows.
\end{proof}

For a fixed $t>0$ denote
\begin{align*}
	\Ker \Lambda_t = \{h\in L^ 1(\gamma): \Lambda_th = 0 \quad \gamma-a.s.\}.
\end{align*}
 It follows from standard arguments of functional analysis that $\Ker \Lambda_t$ is  a closed subspace of $L^1(\gamma)$.
 Any  $f \in L^1(\gamma)$ decomposes
\begin{align}\label{odeco}
	f = \Lambda_tf + (f)^{\bot}_t, \qquad t>0,
\end{align}
where $(f)^{\bot}_t \in \Ker \Lambda_t$. 
 We are ready to formulate the result, which gives a simple recurrent formula for determining the distribution of a vector $(X_0, X_{t_1},\ldots,X_{t_n})$. The formula is strictly connected with the notion of kernel of a bridge operator. We will need the following definition: For a given non-negative integer $n$, $0= t_0<t_1<\ldots<t_n$ and for bounded Borel functions $f_1,\ldots, f_n$ define
\begin{align}\label{xin}
	\Xi^{(n)}_{[f_1,..,f_n]}(t_1,\ldots,t_n,z) &= \prod_{i = 1}^nf_i(z)e^{\int_0^{t_{i}-t_{i-1}}F(u,z)du}, \quad n\geq 1, \ z \in E.
\end{align}
We will also need the following class of functions
\begin{align}\label{classk}
	\mathcal{K} = \{g\in L^ 1(\gamma): \quad \alpha \in \Ker \Lambda_s \ {\rm for }\  {\rm  all}\ s >0 \Rightarrow    \  g\alpha \in \Ker \Lambda_t  \ {\rm for }\  {\rm  all}\ t >0 \}.
\end{align}
Actually, for a broad class of diffusions we may expect that $\mathcal{K} = L^ 1(\gamma)$.
\begin{example} \label{Gauss-Gauss} Consider a Gauss-Gauss process, that is $X_t = B_{t+a}$, where $B$ is a standard Brownian motion and $a>0$ is a fixed number, so $\gamma(dz) = \frac{1}{\sqrt{2\pi a}}e^{-\frac{z^2}{2a}}dz$. Then  $\mathcal{K} = L^1(\gamma)$. Indeed, first observe that if $h \in L^1(\gamma)$  and
	\begin{align}\label{eq:h0}
		\mathbb{E}\big(h(B_a)\big|B_{t+a} = z\big) = 0,
	\end{align}
for each $z\in\mathbb{R}$, then
\begin{align*}
		\mathbb{E}\big(h(B_a)\big|B_{t+a}\big) = 0.
	\end{align*}
It follows that for every $\lambda\in \mathbb{R}$ we have
\begin{align*}
		0&=\mathbb{E}\left(\mathbb{E}\big(h(B_a)\big|B_{t+a}\big)e^{\lambda B_{t+a}}\right ) = \mathbb{E}\left(h(B_a)e^{\lambda B_{t+a}}\right )\\
&= \mathbb{E}\left(h(B_a)e^{\lambda (B_{t+a}-B_a)}e^{\lambda B_a}\right )=\mathbb{E}\left(e^{\lambda (B_{t+a}-B_a)}\right )\mathbb{E}\left(h(B_a)e^{\lambda B_a}\right ).
	\end{align*}
Consequently,
\begin{align*}
		\mathbb{E}\left(h(B_a)e^{\lambda B_a}\right )=0
	\end{align*}
for every $\lambda\in \mathbb{R}$, and so $h\equiv 0$. This yields that $\Ker\Lambda_t=\{0\}$, so $\mathcal{K} = L^1(\gamma)$. \\
 By  a similar arguments we may conclude analogous results for an Ornstein-Uhlenbeck process, a  Bessel process and many other processes. We omit the details.
\end{example}

{Using  the notion of class $\mathcal{K}$ we are able to give a description of $\mathbb{E}\prod_{i = 1}^nf_i(X_{t_i})$ in terms of distribution of $X_0$, which allows to compute the distribution of vector $(X_{t_1},\ldots,X_{t_n})$ for $X$ having the FOED property in terms of distribution of $X_0$.}

\begin{theorem} \label{RCR} Assume that $X$ is a Markov process having the FOED property. Let $n\in\mathbb{N}$ and $0=t_0<t_1<\ldots<t_n$. Assume that $f_1,\ldots ,f_n$ are bounded Borel functions. { Then
	\begin{align*}
		\mathbb{E}f_1(X_{t_1}) = \mathbb{E}\Xi^{(1)}_{[f_1]}(t_1,X_0).
	\end{align*}
Moreover, if for $n\geq 2$ and   $j\in\{1,\ldots,n-1\}$ it holds
	$$f_{n-1}(\cdot)\cdots f_j(\cdot)e^{\sum_{i=j}^{n-1}\int_0^{t_{i}-t_{i-1}}F(u,\cdot)du}\in\mathcal{K},$$
then	
  for $n>1$ we have
\begin{align}\label{recaR}
	\mathbb{E}\prod_{i = 1}^nf_i(X_{t_i}) = \mathbb{E}&\Big[\Xi^{(n)}_{[f_1,\ldots,f_n]}(t_1,\ldots,t_n,X_0)\\
	& - f_n(X_0)e^{\int_0^{t_n-t_{n-1}}F(u,X_0)du}\big(\Xi^{(n-1)}_{[f_1,\ldots,f_{n-1}]}(t_1,\ldots,t_{n-1},\cdot)\big)^{\bot}_{t_n - t_{n-1}}(X_0)\Big] \notag.
\end{align}}
\end{theorem}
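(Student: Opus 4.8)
The plan is to treat the two assertions separately: the first is nothing but the FOED property, while the recursion (\ref{recaR}) is extracted from the bridge representation of Theorem \ref{mdim}. For $n=1$, since $t_0=0$ the definition (\ref{xin}) gives $\Xi^{(1)}_{[f_1]}(t_1,z)=f_1(z)e^{\int_0^{t_1}F(u,z)du}$, so $\mathbb{E}f_1(X_{t_1})=\mathbb{E}\Xi^{(1)}_{[f_1]}(t_1,X_0)$ is exactly the defining relation (\ref{dfFED}) applied to $f_1$. For the recursion I specialize Theorem \ref{mdim} to $f_0\equiv 1$; since $\Lambda_{t_1}1=\mathbb{E}(1\mid X_{t_1})=1$, the innermost factor disappears and I obtain
\[
\mathbb{E}\prod_{i=1}^n f_i(X_{t_i})=\mathbb{E}\Big(f_n(X_0)e^{\int_0^{t_n-t_{n-1}}F(u,X_0)du}\,\Lambda_{t_n-t_{n-1}}G_{n-1}(X_0)\Big),
\]
where $G_{n-1}$ is the nested bridge function defined by $G_1=f_1(\cdot)e^{\int_0^{t_1}F(u,\cdot)du}$ and $G_k=f_k(\cdot)e^{\int_0^{t_k-t_{k-1}}F(u,\cdot)du}\Lambda_{t_k-t_{k-1}}G_{k-1}$ for $k\ge 2$.

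Next I record the elementary identity $\Xi^{(k)}=f_k(\cdot)e^{\int_0^{t_k-t_{k-1}}F(u,\cdot)du}\,\Xi^{(k-1)}$ coming straight from the product form (\ref{xin}), and I introduce the defect $D_k:=G_k-\Xi^{(k)}$. Applying the decomposition (\ref{odeco}) to $\Xi^{(n-1)}$ relative to $\Lambda_{t_n-t_{n-1}}$, I claim that the whole statement (\ref{recaR}) reduces to the single fact that $D_{n-1}\in\Ker\Lambda_{t_n-t_{n-1}}$. Indeed, if this holds then $\Lambda_{t_n-t_{n-1}}G_{n-1}=\Lambda_{t_n-t_{n-1}}\Xi^{(n-1)}=\Xi^{(n-1)}-(\Xi^{(n-1)})^{\bot}_{t_n-t_{n-1}}$, and substituting into the display above together with $f_n(\cdot)e^{\int_0^{t_n-t_{n-1}}F(u,\cdot)du}\Xi^{(n-1)}=\Xi^{(n)}$ yields (\ref{recaR}) after taking expectations.

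The core is therefore an induction showing that the defects are annihilated by the appropriate bridge operators. Combining the recursions for $G_k$ and $\Xi^{(k)}$ with (\ref{odeco}) gives the exact identity
\[
D_k=f_k(\cdot)e^{\int_0^{t_k-t_{k-1}}F(u,\cdot)du}\Big(\Lambda_{t_k-t_{k-1}}D_{k-1}-(\Xi^{(k-1)})^{\bot}_{t_k-t_{k-1}}\Big),\qquad D_1=0.
\]
Unrolling this recursion expresses $D_{n-1}$ as a finite sum in which the orthogonal remainder $(\Xi^{(j-1)})^{\bot}_{t_j-t_{j-1}}\in\Ker\Lambda_{t_j-t_{j-1}}$ born at stage $j$ is carried forward through the successive multipliers $f_j,\ldots,f_{n-1}$; the accumulated multiplier is precisely the suffix product $f_{n-1}(\cdot)\cdots f_j(\cdot)e^{\sum_{i=j}^{n-1}\int_0^{t_i-t_{i-1}}F(u,\cdot)du}$ appearing in the hypothesis. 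It is exactly the assumption that each such suffix product lies in the class $\mathcal{K}$ of (\ref{classk}) that lets these remainders be transported from the kernel where they are created into $\Ker\Lambda_{t_n-t_{n-1}}$, so that the telescoping closes and $D_{n-1}\in\Ker\Lambda_{t_n-t_{n-1}}$.

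The main obstacle is this last bookkeeping. The orthogonal remainders are manufactured in $\Ker\Lambda_{t_j-t_{j-1}}$ for varying $j$, while the reduction needs membership in the single kernel $\Ker\Lambda_{t_n-t_{n-1}}$, and the bridge operators interspersed in the unrolled defect do not commute with multiplication by the $f_i e^{\int_0^{t_i-t_{i-1}}F}$. The class $\mathcal{K}$ is tailored precisely to overcome this: its defining implication in (\ref{classk}) guarantees that multiplication by the relevant suffix-product functions sends kernel elements back into the kernels, which is what makes each term of the unrolled $D_{n-1}$ vanish under $\Lambda_{t_n-t_{n-1}}$. Once $D_{n-1}\in\Ker\Lambda_{t_n-t_{n-1}}$ is established, the reduction of the second paragraph completes the proof, the base case $n=1$ and the first displayed identity being immediate.
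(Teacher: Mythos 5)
Your reduction is sound and closely parallels the paper's own proof: the $n=1$ case is the FOED definition, the specialization of Theorem \ref{mdim} to $f_0\equiv 1$ with $\Lambda_{t_1}1=1$ is exactly the paper's starting point, your defect recursion $D_k=f_k e^{\int_0^{t_k-t_{k-1}}F(u,\cdot)du}\big(\Lambda_{t_k-t_{k-1}}D_{k-1}-(\Xi^{(k-1)})^{\bot}_{t_k-t_{k-1}}\big)$ is algebraically correct, and the fact you reduce everything to, $D_{n-1}\in\Ker\Lambda_{t_n-t_{n-1}}$, is precisely what the paper establishes (there it reads $\Lambda_{t_n-t_{n-1}}G_{n-1}=\Lambda_{t_n-t_{n-1}}\Xi^{(n-1)}$). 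The gap is in your last paragraph, where you claim the hypothesis kills every term of the unrolled defect. Writing $M_i$ for multiplication by $f_i(\cdot)e^{\int_0^{t_i-t_{i-1}}F(u,\cdot)du}$, the term of $D_{n-1}$ born at stage $k$ is
\begin{align*}
M_{n-1}\Lambda_{t_{n-1}-t_{n-2}}\,M_{n-2}\Lambda_{t_{n-2}-t_{n-3}}\cdots M_{k+1}\Lambda_{t_{k+1}-t_{k}}\,M_k\big(\Xi^{(k-1)}\big)^{\bot}_{t_k-t_{k-1}},
\end{align*}
so for $k\le n-2$ the accumulated multiplier is \emph{not} the plain suffix product $f_{n-1}\cdots f_k e^{\sum_{i=k}^{n-1}\int_0^{t_i-t_{i-1}}F(u,\cdot)du}$: bridge operators are interspersed between the multiplications. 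The hypothesis only places plain suffix products in $\mathcal{K}$, and the defining implication in (\ref{classk}) gives no way to push a kernel element through an interspersed $\Lambda$. Already for $n=4$ and $k=2$ your argument says nothing about $M_3\Lambda_{t_3-t_2}M_2(\Xi^{(1)})^{\bot}_{t_2-t_1}$, since $f_2e^{\int_0^{t_2-t_1}F(u,\cdot)du}$ by itself is not assumed to lie in $\mathcal{K}$. Acknowledging that the operators ``do not commute'' and then asserting that $\mathcal{K}$ was ``tailored precisely to overcome this'' is exactly the point that requires proof; as written, your induction is complete only for $n\le 3$.

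The repair is to flatten the nested expression \emph{before} remainders pile up behind interspersed operators, which is what the paper does. Show by descending induction that $\Lambda_{t_n-t_{n-1}}D_{n-1}=\Lambda_{t_n-t_{n-1}}\big(M_{n-1}\cdots M_{k+1}D_k\big)$ for $k=n-2,\dots,1$: in the step, expand $D_k=M_k\Lambda_{t_k-t_{k-1}}D_{k-1}-M_k(\Xi^{(k-1)})^{\bot}_{t_k-t_{k-1}}$ and, crucially, apply (\ref{odeco}) once more to the defect itself, $\Lambda_{t_k-t_{k-1}}D_{k-1}=D_{k-1}-(D_{k-1})^{\bot}_{t_k-t_{k-1}}$. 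Now both remainders, $(\Xi^{(k-1)})^{\bot}_{t_k-t_{k-1}}$ and $(D_{k-1})^{\bot}_{t_k-t_{k-1}}$, lie in $\Ker\Lambda_{t_k-t_{k-1}}$ and are multiplied by the \emph{plain} product $f_{n-1}\cdots f_k e^{\sum_{i=k}^{n-1}\int_0^{t_i-t_{i-1}}F(u,\cdot)du}\in\mathcal{K}$ (the hypothesis with $j=k$), hence are annihilated by $\Lambda_{t_n-t_{n-1}}$; the induction terminates at $D_1=0$. Since $h\mapsto h-\Lambda_t h$ is linear, $(G_{k-1})^{\bot}=(D_{k-1})^{\bot}+(\Xi^{(k-1)})^{\bot}$, and this corrected induction is literally the paper's argument, in which each orthogonal remainder $(G_{k-1})^{\bot}_{t_k-t_{k-1}}$ of the nested function $G_{k-1}$ appears already multiplied by a hypothesis function rather than by a nested operator expression.
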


\begin{proof}

\textit{Step 1.} Let $n = 1$. By definition, for $t_1>0$ and $z\in E$
\begin{align*}
 	\Xi^{(1)}_{[f_1]}(t_1,z)= f_1(z)e^{\int_0^{t_1}F(u,z)du}.
\end{align*}	
Hence
\begin{align*}
 \mathbb{E}\Big[f_1(X_0)e^{\int_0^{t_1}F(u,X_0)du}\Big] = \mathbb{E}f_1(X_{t_1}),
\end{align*}
where in that last equality we used  the definition of FOED. This proves the assertion for $n=1$.

\ \\
 \textit{Step 2.}
We recall formula (\ref{findist}) for $f_0\equiv 1$
\begin{align}\label{findistR}
	\mathbb{E}f_1(X_{t_1})&\ldots f_n(X_{t_n})	= \mathbb{E}\Big(f_n(X_0)e^{\int_0^{t_n-t_{n-1}}F(u,X_0)du}\\
	&\times \Lambda_{t_n-t_{n-1}}\Big(f_{n-1}(\cdot)e^{\int_0^{t_{n-1}-t_{n-2}}F(u,\cdot)du}\Lambda_{t_{n-1}-t_{n-2}}\Big(\cdots\Lambda_{t_1}1\Big)(\cdot)\Big)(X_0)\Big).\notag
\end{align}
 Take $n\geq 2$. Set
\begin{align*}
	\widetilde{\Lambda}_{1}(z) &= \Lambda_{t_1}1(z);\\
	\widetilde{\Lambda}_{i}(z) &= \Lambda_{t_{i}-t_{i-1}}\Big(f_{i-1}(\cdot)e^{\int_0^{t_{i-1}-t_{i-2}}F(u,\cdot)du}\Lambda_{t_{i-1}-t_{i-2}}\Big(\cdots\Lambda_{t_1}1\Big)(\cdot)\Big)(z), \quad i \geq 2, \ z\in E,
\end{align*}
so,  using (\ref{findistR}), we may write
\begin{align*}
\mathbb{E}f_1(X_{t_1})\ldots f_n(X_{t_n}) = \mathbb{E}\Big(&f_n(X_0)e^{\int_0^{t_n-t_{n-1}}F(u,X_0)du}\\
	&\times \Lambda_{t_n-t_{n-1}}\Big(f_{n-1}(\cdot)e^{\int_0^{t_{n-1}-t_{n-2}}F(u,\cdot)du}\widetilde{\Lambda}_{n-1}(\cdot)\Big)(X_0)\Big),
\end{align*}
which is equal to
\begin{align*}
	\mathbb{E}&\Big(f_n(X_0)e^{\int_0^{t_n-t_{n-1}}F(u,X_0)du}\\
	&\times \Lambda_{t_n-t_{n-1}}\Big(f_{n-1}(\cdot)e^{\int_0^{t_{n-1}-t_{n-2}}F(u,\cdot)du}\Lambda_{t_{n-1}-t_{n-2}}\Big(f_{n-2}
(\cdot)e^{\int_0^{t_{n-2}-t_{n-3}}F(u,\cdot)du}\widetilde{\Lambda}_{n-2}(\cdot)\Big)(X_0)\Big).
\end{align*}
We rewrite the last expression using decomposition (\ref{odeco}) as
\begin{align*}
\mathbb{E}&\Big(f_n(X_0)e^{\int_0^{t_n-t_{n-1}}F(u,X_0)du}\\
	&\times \Lambda_{t_n-t_{n-1}}\Big(f_{n-1}(\cdot)e^{\int_0^{t_{n-1}-t_{n-2}}F(u,\cdot)du}\Big(f_{n-2}(\cdot)e^{\int_0^{t_{n-2}-t_{n-3}}F(u,\cdot)du}\widetilde{\Lambda}_{n-2}(\cdot)\\
	& \quad - \Big(f_{n-2}(\cdot)e^{\int_0^{t_{n-2}-t_{n-3}}F(u,\cdot)du}\widetilde{\Lambda}_{n-2}(\cdot)\Big)^{\bot}_{t_{n-1}-t_{n-2}}\Big)(X_0)\Big) =:\mathcal{I}.
\end{align*}

By assumption $f_{n-1}(\cdot)e^{\int_0^{t_{n-1}-t_{n-2}}F(u,\cdot)du}\in\mathcal{K}$, so we obtain
\begin{align*}
	\mathcal{I} &= \mathbb{E}\Big(f_n(X_0)e^{\int_0^{t_n-t_{n-1}}F(u,X_0)du}\\
	&\times \Lambda_{t_n-t_{n-1}}\Big(f_{n-1}(\cdot)e^{\int_0^{t_{n-1}-t_{n-2}}F(u,\cdot)du}f_{n-2}(\cdot)e^{\int_0^{t_{n-2}-t_{n-3}}F(u,\cdot)du}\widetilde{\Lambda}_{n-2}(\cdot)\Big)(X_0)\Big),
\end{align*}
and, after iterating in the analogous way, we conclude that
\begin{align*}
&\mathbb{E}f_1(X_{t_1})\ldots f_n(X_{t_n})\\
&=	\mathbb{E}\Big(f_n(X_0)e^{\int_0^{t_n-t_{n-1}}F(u,X_0)du} \Lambda_{t_n-t_{n-1}}\Big(f_{n-1}(\cdot)f_{n-2}(\cdot)\cdots f_{1}(\cdot)e^{\sum_{i=1}^{n-1}\int_0^{t_{i}-t_{i-1}}F(u,\cdot)du}\widetilde{\Lambda}_{1}(\cdot)\Big)(X_0)\Big)\\
	&= \mathbb{E}\Big(f_n(X_0)e^{\int_0^{t_n-t_{n-1}}F(u,X_0)du} \Lambda_{t_n-t_{n-1}}\Big(f_{n-1}(\cdot)f_{n-2}(\cdot)\cdots f_{1}(\cdot)e^{\sum_{i=1}^{n-1}
	\int_0^{t_{i}-t_{i-1}}F(u,\cdot)du}\Lambda_{t_1}1(\cdot)\Big)(X_0)\Big)
\end{align*}
Since $\Lambda_{t_1}1=1$ we have
\begin{align*}
&\mathbb{E}f_1(X_{t_1})\ldots f_n(X_{t_n})\\
 &= \mathbb{E}\Big(f_n(X_0)e^{\int_0^{t_n-t_{n-1}}F(u,X_0)du} \Lambda_{t_n-t_{n-1}}\Big(f_{n-1}(\cdot)f_{n-2}(\cdot)\cdots f_{1}(\cdot)e^{\sum_{i=1}^{n-1}\int_0^{t_{i}-t_{i-1}}F(u,\cdot)du}\Big)(X_0)\Big)\\
 &= \mathbb{E}\Big(f_n(X_0)e^{\int_0^{t_n-t_{n-1}}F(u,X_0)du} \Big(f_{n-1}(\cdot)f_{n-2}(\cdot)\cdots f_{1}(\cdot)e^{\sum_{i=1}^{n-1}\int_0^{t_{i}-t_{i-1}}F(u,\cdot)du}\\
 &\quad - \big(f_{n-1}(\cdot)f_{n-2}(\cdot)\cdots f_{1}(\cdot)e^{\sum_{i=1}^{n-1}\int_0^{t_{i}-t_{i-1}}F(u,\cdot)du}\big)^{\bot}_{t_n-t_{n-1}}\Big)(X_0)\Big)\\
&= \mathbb{E}\Big(e^{\sum_{i=1}^{n}\int_0^{t_{i}-t_{i-1}}F(u,\cdot)du}\prod_{i = 0}^nf_i(X_0)\\
& \quad - f_n(X_0)e^{\int_0^{t_n-t_{n-1}}F(u,X_0)du}\big(f_{n-1}(\cdot)f_{n-2}(\cdot)\cdots f_{1}(\cdot)e^{\sum_{i=1}^{n-1}\int_0^{t_{i}-t_{i-1}}F(u,\cdot)du}\big)^{\bot}_{t_n-t_{n-1}}(X_0)\Big)\\
&= \mathbb{E}\Big[\Xi^{(n)}_{[f_1,\ldots,f_n]}(t_1,\ldots,t_n,X_0)\\
  & \quad - f_n(X_0)e^{\int_0^{t_n-t_{n-1}}F(u,X_0)du}\big(\Xi^{(n-1)}_{[f_1,\ldots,f_{n-1}]}(t_1,\ldots,t_{n-1},\cdot)\big)^{\bot}_{t_n - t_{n-1}}(X_0)\Big].
\end{align*}
The proof is complete.
\end{proof}

\begin{remark}
Let us comment on a possible use of formula (\ref{recaR}). Suppose that we know the distribution $\gamma$ of $X_0$ and the transition density $p_t(\cdot,\cdot)$ of a Markov process $X$. In the classical forward approach, in order to compute $\mathbb{E}\big(\prod_{i = 1}^nf_i(X_{t_i})\big)$ we need to compute a multiple integral involving a chain of transition densities. The alternative backward approach is completely different and uses the notion of bridge operator and its kernel. We have the following algorithm:

\textit{Step 1.} We determine $e^{\int_0^tF(u,z)du}$ as
\begin{align*}
	e^{\int_0^tF(u,z)du} = \frac{\mu_t(dz)}{\gamma(dz)}, \quad \mu_t(dz) = \int_Ep_t(v,dz)\gamma(dv).
\end{align*}
This is needed for the computation of the family of objects  $(\Xi^{(i)}_{[f_1,\ldots,f_i]}(t,z))_{t>0, i\leq n}$.

\textit{Step 2.} Using (\ref{xin}) we compute $(\Xi^{(i)}_{[f_1,\ldots,f_i]}(t_0,\ldots,t_i,z))_{0<t_1< ..< t_i \leq t, i\leq n}$ in straightforward recursive way.

\textit{Step 3.} To determine the distribution of $(X_0,X_{t_1},\ldots,X_{t_n})$ by applying Theorem \ref{RCR} we need to verify that for $n\geq 2$ and   $j\in\{1,\ldots,n-1\}$ it holds
	$$f_{n-1}(\cdot)\cdots f_j(\cdot)e^{\sum_{i=j}^{n-1}\int_0^{t_{i}-t_{i-1}}F(u,\cdot)du}\in\mathcal{K}.$$
	Once this is verified, we then follow the recurrence (\ref{recaR}) using formulas (\ref{Lmt}) and  (\ref{odeco}).
\end{remark}

 In the  corollary below we see that under appropriate assumptions   the FOED property reduces the complexity of computational problems related to finite-dimensional distributions of a Markov process. The standard approach relies on computations based on chain rule of transition densities. The FOED backward approach offers the new possibility of dealing with such   computations.
\begin{corollary} Under assumptions of Theorem \ref{RCR} and under the assumption that $X_0$ has a density $g>0$,   formula (\ref{recaR}) can be written as
\begin{align}\label{redu}
	\mathbb{E}\prod_{i = 1}^nf_i(X_{t_i}) = \mathbb{E}\Big[\frac{f_n(X_0)}{g(X_0)}\Xi^{(n-1)}_{[f_1,\ldots,f_{n-1}]}(t_1,\ldots,t_n,\widehat{X}_0)p_{t_n-t_{n-1}}(\widehat{X}_0, X_0)\Big],
\end{align}
where $\widehat{X}_0$ is an independent copy of $X_0$.
\end{corollary}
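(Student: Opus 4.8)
The plan is to start from the already-proved identity (\ref{recaR}) and collapse its two right-hand terms into a single bridge-operator expression, and then to unfold that expression using the explicit representation (\ref{Lmt}) of $\Lambda_t$. First I would record the multiplicative recursion that the functions $\Xi^{(n)}$ inherit directly from their definition (\ref{xin}), namely
\[
\Xi^{(n)}_{[f_1,\ldots,f_n]}(t_1,\ldots,t_n,z) = f_n(z)\,e^{\int_0^{t_n-t_{n-1}}F(u,z)du}\,\Xi^{(n-1)}_{[f_1,\ldots,f_{n-1}]}(t_1,\ldots,t_{n-1},z).
\]
Substituting this into the first term of (\ref{recaR}) lets me factor $f_n(X_0)e^{\int_0^{t_n-t_{n-1}}F(u,X_0)du}$ out of both terms, so that the bracket reduces to $\Xi^{(n-1)}(X_0)-\big(\Xi^{(n-1)}\big)^{\bot}_{t_n-t_{n-1}}(X_0)$, where I abbreviate $\Xi^{(n-1)}:=\Xi^{(n-1)}_{[f_1,\ldots,f_{n-1}]}(t_1,\ldots,t_{n-1},\cdot)$. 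By the orthogonal decomposition (\ref{odeco}) this difference is precisely $\Lambda_{t_n-t_{n-1}}\Xi^{(n-1)}(X_0)$, giving the intermediate identity
\[
\mathbb{E}\prod_{i=1}^n f_i(X_{t_i}) = \mathbb{E}\Big[f_n(X_0)\,e^{\int_0^{t_n-t_{n-1}}F(u,X_0)du}\,\Lambda_{t_n-t_{n-1}}\Xi^{(n-1)}(X_0)\Big].
\]

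Second, I would unfold the bridge operator using that $X_0$ has density $g>0$: formula (\ref{Lmt}) yields
\[
\Lambda_{t_n-t_{n-1}}\Xi^{(n-1)}(z) = \frac{\int_E \Xi^{(n-1)}(v)\,p_{t_n-t_{n-1}}(v,z)\,\gamma(dv)}{g(z)\,e^{\int_0^{t_n-t_{n-1}}F(u,z)du}}.
\]
The exponential in the denominator cancels the exponential already present, and the factor $1/g(z)$ combines with $f_n$ to give $f_n(X_0)/g(X_0)$. It then remains to read the inner integral, evaluated at $z=X_0$, as an expectation: writing $\gamma(dv)=g(v)\,dv$ and letting $\widehat X_0$ be an independent copy of $X_0$, conditioning on $X_0$ gives
\[
\int_E \Xi^{(n-1)}(v)\,p_{t_n-t_{n-1}}(v,X_0)\,\gamma(dv) = \mathbb{E}\big[\Xi^{(n-1)}(\widehat X_0)\,p_{t_n-t_{n-1}}(\widehat X_0,X_0)\,\big|\,X_0\big].
\]
Taking the outer expectation over $X_0$ and applying the tower property then produces exactly (\ref{redu}).

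The only genuinely delicate point is justifying the passage to the independent copy, i.e. the Fubini-type step that converts the inner integral against $\gamma$ into a conditional expectation over $\widehat X_0$. This requires joint integrability of $f_n(X_0)\,g(X_0)^{-1}\,\Xi^{(n-1)}(\widehat X_0)\,p_{t_n-t_{n-1}}(\widehat X_0,X_0)$ under the product law of $(X_0,\widehat X_0)$, which I expect to follow from boundedness of the $f_i$ together with the integrability already built into the hypotheses of Theorem \ref{RCR}; in particular the normalization $\mathbb{E}\,e^{\int_0^{t}F(u,X_0)du}=1$ from (\ref{cons}) controls the exponential factors inside $\Xi^{(n-1)}$. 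Everything else in the argument is the bookkeeping of the cancellations described above, so once integrability is secured the corollary follows.
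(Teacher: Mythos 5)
Your proposal is correct and follows essentially the same route as the paper's own proof: collapse the two terms of (\ref{recaR}) via the recursion for $\Xi^{(n)}$ and the decomposition (\ref{odeco}) into $\mathbb{E}\big[f_n(X_0)e^{\int_0^{t_n-t_{n-1}}F(u,X_0)du}\Lambda_{t_n-t_{n-1}}\Xi^{(n-1)}(X_0)\big]$, unfold $\Lambda_{t_n-t_{n-1}}$ by (\ref{Lmt}) so the exponential cancels, and read the resulting integral against $\gamma$ as a conditional expectation over the independent copy $\widehat{X}_0$ (the paper justifies this last step by citing the independence lemma, exactly the Fubini-type point you flag).
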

\begin{proof} Using (\ref{odeco}) we rewrite (\ref{recaR})  in the form
\begin{align*}
	\mathbb{E}&\prod_{i = 1}^nf_i(X_{t_i}) = \mathbb{E}\Big[\Xi^{(n)}_{[f_1,\ldots,f_n]}(t_1,\ldots,t_n,X_0)  - f_n(X_0)e^{\int_0^{t_n-t_{n-1}}F(u,X_0)du}\\
	&\times \Big(\Xi^{(n-1)}_{[f_1,\ldots,f_{n-1}]}(t_1,\ldots,t_{n-1},X_0) - \Lambda_{t_n - t_{n-1}}(\Xi^{(n-1)}_{[f_1,\ldots,f_{n-1}]}(t_1,\ldots,t_{n-1},\cdot))(X_0)\Big)\Big]\\
	&= \mathbb{E}\Big(f_n(X_0)e^{\int_0^{t_n-t_{n-1}}F(u,X_0)du}\Lambda_{t_n - t_{n-1}}(\Xi^{(n-1)}_{[f_1,\ldots,f_{n-1}]}(t_1,\ldots,t_{n-1},\cdot))(X_0)\Big).
\end{align*}
By (\ref{Lmt}) the last expression is  equal to
\begin{align*}
\mathbb{E}&\Big(\frac{f_n(X_0)}{g(X_0)}\int_E(\Xi^{(n-1)}_{[f_1,\ldots,f_{n-1}]}(t_1,\ldots,t_{n-1},v)p_{t_n - t_{n-1}}(v,X_0)g(v)dv)\Big)\\
&=\mathbb{E}\Big(\frac{f_n(X_0)}{g(X_0)}\mathbb{E} \left(\Xi^{(n-1)}_{[f_1,\ldots,f_{n-1}]}(t_1,\ldots,t_{n-1},\widehat{X}_0)p_{t_n - t_{n-1}}(\widehat{X}_0,X_0)|X_0\right )\Big)\\
&= \mathbb{E}\Big(\frac{f_n(X_0)}{g(X_0)}\Xi^{(n-1)}_{[f_1,\ldots,f_{n-1}]}(t_1,\ldots,t_{n-1},\widehat{X}_0)p_{t_n - t_{n-1}}(\widehat{X}_0,X_0)\Big),
\end{align*}
 where the first equality is a consequence of the independence lemma (see e.g. \cite{Mik}, page 72, Rule 7). This finishes the proof.
\end{proof}

 Using Theorem \ref{RCR} we are able to determine the finite-dimensional distributions of the  process $e^X$ for various processes $X$ satisfying  the FOED property.
\begin{example} Let $X$ be a Gauss-Gauss or an Ornstein-Uhnlenbeck process. We saw in Example \ref{GGex-j} that $e^{\int_0^{t}F(u,z)du}$ is a function of order $e^{c z^2}$ with respect to $z$ for a constant $c>0$ dependent on $t $.    Since, as shown in Example \ref{Gauss-Gauss},  $\mathcal{K} = L^1(\gamma)$, then to apply Theorem \ref{RCR} we need to verify integrability of the functions
$$f_{n-1}(\cdot)\cdots f_j(\cdot)e^{\sum_{i=j}^{n-1}\int_0^{t_{i}-t_{i-1}}F(u,\cdot)du}$$
for $j\leq n-1$.
This will always hold if we choose $f_i(z) = e^{-\lambda_i e^z}$ for $\lambda_i > 0$, $i =1,..,n$. In this case formula (\ref{redu}) enables us to establish a formula for Laplace transform for vector of exponents of $X_{t_i}$, $i =1,..,n$, namely
\begin{align*}
	\mathbb{E}e^{-\sum_{i=1}^n\lambda_ie^{X_{t_i}}} = \mathbb{E}\Big(e^{-\lambda_ne^{X_0} -\sum_{i=1}^n\lambda_ie^{\widetilde{X}_{0}}}e^{ \sum_{i=1}^{n-1}\int_0^{t_i - t_{i-1}}F(u,\widetilde{X}_0)du} \frac{p_{t_n - t_{n-1}}(\widetilde{X}_0,X_0)}{g(X_0)} \Big),
\end{align*}
where $\widetilde{X}_0$ is an independent copy of $X_0$. In terms of computation the above equality provides a reduction of a multivariate integration to  bivariate integration.
\end{example}
Under additional assumptions we   are able to describe   the distribution of vector $(X_1,\ldots,X_n)$ in terms of distribution of $X_0$.
\begin{corollary}  \label{dist}
	Assume that $X$ is a Markov process with the FOED property,  $\mathcal{K} = L^1(\gamma)$   and
$e^{\sum_{i=1}^{n-1}\int_0^{t_{i}-t_{i-1}}F(u,\cdot)du}\in L^1(\gamma)$ for $0<t_1<\ldots<t_n$. Then for bounded Borel functions $f_1,\ldots, f_n$   we have
\begin{align}\label{xindi}
	\mathbb{E}\prod_{i = 1}^nf_i(X_{t_i}) = \mathbb{E}\Xi^{(n)}_{[f_1,\ldots,f_n]}(t_1,\ldots,t_n,X_0).
\end{align}
\end{corollary}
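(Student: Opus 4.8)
The plan is to derive \eqref{xindi} directly from Theorem~\ref{RCR} by showing that, under the additional hypotheses, the single correction term in \eqref{recaR} has zero expectation. First I would check that Theorem~\ref{RCR} is applicable: since $\mathcal{K}=L^1(\gamma)$, the membership requirement $f_{n-1}\cdots f_j\,e^{\sum_{i=j}^{n-1}\int_0^{t_i-t_{i-1}}F(u,\cdot)du}\in\mathcal{K}$ is automatic, while the assumption $e^{\sum_{i=1}^{n-1}\int_0^{t_i-t_{i-1}}F(u,\cdot)du}\in L^1(\gamma)$ together with boundedness of the $f_i$ guarantees $\Xi^{(n-1)}_{[f_1,\ldots,f_{n-1}]}\in L^1(\gamma)$, so every object in \eqref{recaR} is well defined. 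For $n=1$ the identity \eqref{xindi} is exactly the first assertion of Theorem~\ref{RCR}, so I may assume $n\ge 2$. It then remains to prove that
$$\mathbb{E}\Big[f_n(X_0)\,e^{\int_0^{t_n-t_{n-1}}F(u,X_0)du}\big(\Xi^{(n-1)}_{[f_1,\ldots,f_{n-1}]}(t_1,\ldots,t_{n-1},\cdot)\big)^{\bot}_{t_n-t_{n-1}}(X_0)\Big]=0 .$$

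The key ingredient I would isolate is a zero-mean property of bridge kernels: if $h\in\Ker\Lambda_t$ for some $t>0$, then $\mathbb{E}\,h(X_0)=0$. Indeed, by the tower property and the definition of $\Lambda_t$ we have $\mathbb{E}\,h(X_0)=\mathbb{E}\big[\mathbb{E}(h(X_0)\mid X_t)\big]=\mathbb{E}\big[(\Lambda_t h)(X_t)\big]=0$, since $\Lambda_t h=0$ $\gamma$-a.s.\ and $\mu_t$ has the same support as $\gamma$. Thus it suffices to show that the integrand displayed above lies in $\Ker\Lambda_{t_n-t_{n-1}}$.

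To establish this, write $s=t_n-t_{n-1}$ and $\alpha=\big(\Xi^{(n-1)}_{[f_1,\ldots,f_{n-1}]}\big)^{\bot}_{s}$, which by the decomposition \eqref{odeco} belongs to $\Ker\Lambda_{s}$. The multiplier $g=f_n(\cdot)\,e^{\int_0^{s}F(u,\cdot)du}$ lies in $L^1(\gamma)$, because $f_n$ is bounded and $\mathbb{E}\,e^{\int_0^{s}F(u,X_0)du}=1$ (take $f\equiv 1$ in the FOED definition \eqref{dfFED}). Since $\mathcal{K}=L^1(\gamma)$, the defining property \eqref{classk} of $\mathcal{K}$ yields $g\alpha\in\Ker\Lambda_{s}$. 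Applying the zero-mean property to $h=g\alpha$ then gives the vanishing of the correction term, and \eqref{xindi} follows at once from \eqref{recaR}. Conceptually, this last step simply strips off the outermost bridge operator that survived in the proof of Theorem~\ref{RCR}, by the same mechanism used there to strip the inner ones.

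I expect the main obstacle to be the bookkeeping around the class $\mathcal{K}$: the definition \eqref{classk} is phrased with $\alpha$ lying in $\Ker\Lambda_s$ for \emph{all} $s>0$, whereas the element $\alpha$ produced by \eqref{odeco} is a priori only in the single kernel $\Ker\Lambda_{s}$ with $s=t_n-t_{n-1}$; aligning these quantifiers (exactly as is implicitly done in the proof of Theorem~\ref{RCR}) is the delicate point, and it is precisely where the hypothesis $\mathcal{K}=L^1(\gamma)$ is used. A secondary, more routine, issue is the integrability of the product $g\alpha$, which is controlled by the assumption $e^{\sum_{i=1}^{n-1}\int_0^{t_i-t_{i-1}}F(u,\cdot)du}\in L^1(\gamma)$ and the identity $\mathbb{E}\,e^{\int_0^{s}F(u,X_0)du}=1$.
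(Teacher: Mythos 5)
Your proof is correct, but it takes a genuinely different route from the paper's. The paper's argument is global: it shows that $\mathcal{K}=L^1(\gamma)$ forces $\Ker\Lambda_t=\{0\}$ for every $t>0$ --- take $\alpha\in\Ker\Lambda_t$; since $\alpha\in L^1(\gamma)=\mathcal{K}$, the defining property of $\mathcal{K}$ applied with the pair $(g,\alpha)=(\alpha,\alpha)$ gives $\alpha^2\in\Ker\Lambda_t$, whence $\mathbb{E}\,\alpha^2(X_0)=\mathbb{E}\big[\Lambda_t(\alpha^2)(X_t)\big]=0$ and $\alpha\equiv 0$ --- so the correction term in \eqref{recaR} vanishes identically as a function and \eqref{xindi} is immediate. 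Your argument is local: you leave the kernels untouched and instead kill the single correction term in expectation, by multiplying the kernel element $\alpha=\big(\Xi^{(n-1)}_{[f_1,\ldots,f_{n-1}]}\big)^{\bot}_{t_n-t_{n-1}}$ by $g=f_n(\cdot)\,e^{\int_0^{t_n-t_{n-1}}F(u,\cdot)du}\in\mathcal{K}$ and invoking the zero-mean property $\mathbb{E}\,h(X_0)=0$ for $h\in\Ker\Lambda_t$ (the same mechanism the paper applies to $\alpha^2$, justified via $\mu_t\sim\gamma$ from \eqref{abscon}). Each route buys something: the paper's yields the stronger structural fact that every bridge operator is injective on $L^1(\gamma)$, so the decomposition \eqref{odeco} is trivial; yours uses strictly less, since it only needs the one multiplier $f_n(\cdot)\,e^{\int_0^{t_n-t_{n-1}}F(u,\cdot)du}$ to lie in $\mathcal{K}$, and hence would prove the corollary under a weaker hypothesis than $\mathcal{K}=L^1(\gamma)$. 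Finally, the quantifier mismatch you flag in \eqref{classk} (membership in one kernel versus all kernels) is real, but it afflicts the paper's own two-line proof in exactly the same way --- there the definition of $\mathcal{K}$ is applied to an $\alpha$ known to lie only in the single kernel $\Ker\Lambda_t$ --- so your per-kernel reading of the definition is evidently the intended one, and your proof stands on the same footing as the authors'.
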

\begin{proof} If $\mathcal{K} = L^1(\gamma)$, then $\Ker \Lambda_t = \{0\}$ for each $t>0$. Indeed, take $\alpha \in \Ker \Lambda_t$.
 Since $\alpha \in  L^1(\gamma)=\mathcal{K}$, by definition of $\mathcal{K}$ it follows that $\alpha^2 \in \Ker \Lambda_t$  which easily yields that $\alpha\equiv 0$. The assertion follows from Theorem \ref{RCR}.
 \end{proof}

Then formula (\ref{xindi}) becomes powerful. Let us see this on the following example.
\begin{example} Consider a Gauss-Gauss process $(X_t)_{t\geq 0} = (B_{t+a})_{t\geq 0}$ for a fixed $a>0$. Hence
\begin{align*}
	\gamma(dz) = \frac{1}{\sqrt{2\pi a}}e^{-\frac{z^2}{2a}}, \qquad e^{\int_0^tF(u,z)du} = \sqrt{\frac{a}{a+t}}e^{\frac{z^2t}{2a(a+t)}}.
\end{align*}
Let for a given natural $n$, $t_i = i \leq n$ and let $f_1,\ldots, f_n$ be bounded Borel functions. Then we easily find that the condition
$e^{\sum_{i=1}^{n-1}\int_0^{t_{i}-t_{i-1}}F(u,\cdot)du}\in L^1(\gamma)$ holds if $n\leq a + 1$. Since $\mathcal{K} = L^1(\gamma)$ the formula (\ref{xindi}) is true in that case.
\end{example}


\begin{example} Let $\gamma(dx) = g(x)dx$ with $g>0$. Suppose that $X$ is a Markov process  with the transition density $p$
 such that $t\mapsto p_t(x,y)\in\mathcal{C}^1[0,\infty)$ for $x,y \in E$.
 Then by Proposition \ref{Ex1} the process $X$ has the FOED property and
\begin{align*}
	e^{\int_0^tF(u,x)du} = \frac{\int_Ep_t(z,x)g(z)dz}{g(x)}\leq \mathbb{E}k(t,X_0),
\end{align*}
where $k(t,z) =\sup_{x}	\frac{p_t(z,x)}{g(x)},\ t\geq 0, \ z\in E.$ If, moreover, $\mathbb{E}k(t_i-t_{i-1},X_0)<\infty$ for each $i$,
then
\begin{align*}
	 e^{\sum_{i=1}^{n}\int_0^{t_i-t_{i-1}}F(u,z)du}\leq \prod_{i=1}^n\mathbb{E}k(t_i-t_{i-1},X_0),
\end{align*}
  which means that $e^{\sum_{i=1}^{n-1}\int_0^{t_{i}-t_{i-1}}F(u,\cdot)du}\in L^1(\gamma)$. If additionally $\mathcal{K} = L^1(\gamma)$, assumptions of Corrolary \ref{dist} are satisfied.
\end{example}

\section{Conditional structures and FOED}

 One of the consequences of the FOED property and of the definition of bridge operators is a convenient computation of conditional structures of Markov processes. By a conditional structure of a Markov process we mean a function of the form
\begin{align}\label{cs-Psi}
	\mathbb{E}\Big(\Psi(X_0,X_{t_1})\cdots,X_{t_n})\Big| X_{s + t_1 + \ldots t_n} = z \Big), \quad z \in E,
\end{align}
for an integrable Borel function  $\Psi:E^{n+1}\rightarrow \mathbb{R}$, $0 < t_1 < \ldots  < t_n$ and $s\geq 0$. A particular  conditional structure takes the form
\begin{align}\label{cs}
	\mathbb{E}\Big(f_0(X_0)f_1(X_{t_1})\cdots f_n(X_{t_n})\Big| X_{s + t_1 + \ldots t_n} = z \Big), \quad z \in E,
\end{align}
for a sequence $(f_i)_{i=0,\ldots,n}$ of bounded Borel functions, $0 < t_1 < \ldots  < t_n$ and $s\geq 0$.
Such conditioning is motivated by use of the FOED property in what follows.
The conditional structures and theory of stochastic bridges are strictly connected.
For  a Markov process with the FOED property using the bridge operators $(\Lambda_{t})_{t\geq 0}$ we can give a full characterization of objects given by (\ref{cs}). For $s\geq 0$ and $0 < t_1 < \ldots  < t_n$ let
\begin{align*}
	T^s_{k,n} = s+\sum_{i=1}^{k}t_i,
\end{align*}
and let $\mu_t$ be the distribution of $X_t$.

\begin{theorem}\label{mltifo} Assume that $X$ is a Markov process having the FOED property and the initial distribution $\gamma$. Let $n\in\mathbb{N}$, $s\geq 0$ and $0 < t_1<\ldots<t_n$. Assume that $f_0,\ldots ,f_n$ are bounded Borel functions. Then a.e. with respect to the Lebesgue measure
\begin{align}\label{csfd}
\mathbb{E}\Big(f_0(X_0)&f_1(X_{t_1})\cdots f_n(X_{t_n})\Big| X_{T^s_{n,n}} = z \Big)
 = e^{-\int_{T^s_{n-1,n}}^{T^s_{n,n}}F(u,z)du}\notag\\
&\quad\times\Lambda_{T^s_{n-1,n}}\Big(f_n(\cdot)e^{\int_0^{t_n-t_{n-1}}F(u,\cdot)du}\Lambda_{t_n - t_{n-1}}\Big(\ldots\Lambda_{t_1}f_0\Big)(\cdot)\Big)(z).
\end{align}
\end{theorem}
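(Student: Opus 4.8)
The plan is to verify the claimed identity \eqref{csfd} by testing both sides against an arbitrary bounded Borel function $g$ evaluated at $X_{T^s_{n,n}}$, which is precisely what characterizes the conditional expectation on the left. Write $T:=T^s_{n,n}$ and abbreviate the inner expression by
\begin{align*}
\Phi(z):=\Lambda_{T^s_{n-1,n}}\Big(f_n(\cdot)e^{\int_0^{t_n-t_{n-1}}F(u,\cdot)du}\Lambda_{t_n-t_{n-1}}\big(\ldots\Lambda_{t_1}f_0\big)(\cdot)\Big)(z),
\end{align*}
so that the right-hand side of \eqref{csfd} is $H(z):=e^{-\int_{T^s_{n-1,n}}^{T}F(u,z)du}\Phi(z)$. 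It then suffices to show, for every bounded Borel $g$,
\begin{align*}
\mathbb{E}\big(f_0(X_0)f_1(X_{t_1})\cdots f_n(X_{t_n})\,g(X_{T})\big)=\mathbb{E}\big(H(X_{T})\,g(X_{T})\big),
\end{align*}
since then $H$ is a version of $\mathbb{E}\big(f_0(X_0)\cdots f_n(X_{t_n})\,\big|\,X_T\big)$ and equality in \eqref{csfd} holds $\mu_T$-a.e.

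For the left-hand side I would apply Theorem \ref{mdim} to the $n+2$ time points $0<t_1<\ldots<t_n<T$ with functions $f_0,\ldots,f_n,g$; this is legitimate as long as $T>t_n$, i.e. $s+t_1+\cdots+t_{n-1}>0$, which holds in every case except the degenerate $n=1,s=0$ (where the conditioning time is $t_1$ and the assertion is the immediate identity $\mathbb{E}(f_0(X_0)f_1(X_{t_1})\mid X_{t_1}=z)=f_1(z)\Lambda_{t_1}f_0(z)$, using $\Lambda_0=\mathrm{id}$). Reading \eqref{findist} with $n$ replaced by $n+1$ and using $T-t_n=T^s_{n-1,n}$ gives
\begin{align*}
\mathbb{E}\big(f_0(X_0)\cdots f_n(X_{t_n})g(X_{T})\big)=\mathbb{E}\big(g(X_0)\,e^{\int_0^{T^s_{n-1,n}}F(u,X_0)du}\,\Phi(X_0)\big).
\end{align*}
All terms are well defined and integrable by the same boundedness bookkeeping as in the proof of Theorem \ref{mdim} (boundedness of the $f_i$ together with $\mathbb{E}\,e^{\int_0^tF(u,X_0)du}=1$).

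For the right-hand side I would invoke the FOED property once, applied to the bounded Borel function $z\mapsto H(z)g(z)$ at time $T$, obtaining
\begin{align*}
\mathbb{E}\big(H(X_{T})g(X_{T})\big)=\mathbb{E}\big(H(X_0)g(X_0)\,e^{\int_0^{T}F(u,X_0)du}\big).
\end{align*}
Substituting the definition of $H$ and using additivity of the integral, $-\int_{T^s_{n-1,n}}^{T}+\int_0^{T}=\int_0^{T^s_{n-1,n}}$, collapses the exponent and yields $\mathbb{E}\big(\Phi(X_0)g(X_0)e^{\int_0^{T^s_{n-1,n}}F(u,X_0)du}\big)$, which is exactly the expression obtained for the left-hand side. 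Matching the two finishes the identity.

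The genuinely delicate point is not the algebra but the measure-theoretic conclusion: the test-function argument delivers equality only $\mu_T$-a.e., whereas the statement asserts equality a.e.\ with respect to Lebesgue measure. To bridge this I would use Proposition \ref{prop2.4}, by which $\mu_T(dz)=\gamma(dz)\,e^{\int_0^{T}F(u,z)du}$ with a strictly positive exponential factor, so that $\mu_T$ is equivalent to $\gamma$; under the standing assumption that the initial law has a strictly positive density, and is hence equivalent to Lebesgue measure, the two a.e.\ notions coincide. A secondary technical point is measurability of $(t,z)\mapsto\Lambda_t(\cdot)(z)$, needed so that $H$ is a bona fide Borel function of $z$; this is covered by the standing measurability assumption on the bridge operators and by the explicit formula \eqref{Lmt}.
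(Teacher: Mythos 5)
Your proof is correct, and although it rests on the same two pillars as the paper's argument --- Theorem \ref{mdim} and the FOED property --- it organizes them differently and more economically. The paper keeps the conditioning time special: it tests against indicators $1_B(X_{T^s_{n,n}})$, uses the Markov property to replace the indicator by $\mathbb{P}_{X_{t_n}}\big(X_{T^s_{n-1,n}}\in B\big)$, applies Theorem \ref{mdim} with the modified last function $f_n(\cdot)\mathbb{P}_{\cdot}\big(X_{T^s_{n-1,n}}\in B\big)$, pulls the indicator back out, conditions on $X_{T^s_{n-1,n}}$ to make $\Lambda_{T^s_{n-1,n}}$ appear, and finally compares the two resulting representations of the same quantity (its $R$ and $S$) via the density relation of Proposition \ref{prop2.4}(b). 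You instead adjoin $T=T^s_{n,n}$ as an $(n+2)$-nd time point, cite \eqref{findist} once with $g$ as the last function, and close with a single FOED application to $Hg$, letting the exponent telescope; this one-shot use of Theorem \ref{mdim} absorbs the paper's Markov-property manipulation, and your FOED step is exactly the paper's measure comparison $\mu_{T^s_{n,n}}(dz)=e^{\int_{T^s_{n-1,n}}^{T^s_{n,n}}F(u,z)du}\mu_{T^s_{n-1,n}}(dz)$ in disguise. What your version buys is brevity plus two points of care the paper omits: the degenerate case $n=1$, $s=0$ (which your route genuinely requires, since Theorem \ref{mdim} needs strictly increasing times; your $\Lambda_0=\mathrm{id}$ reduction is the right fix, while the paper's proof passes through it only formally), and the explicit recognition that the test-function argument yields equality only $\mu_{T^s_{n,n}}$-a.e., so that the Lebesgue-a.e.\ claim needs $\mu_{T^s_{n,n}}\sim\gamma\sim$ Lebesgue, i.e.\ a strictly positive initial density --- a gap present, unacknowledged, in the paper's own proof as well. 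One caveat on your side: the function $Hg$ to which you apply FOED is in general neither bounded nor positive (the factors $e^{\int_0^{\cdot}F(u,\cdot)du}$ hidden inside $\Phi$ are unbounded), so strictly speaking you need the positive/negative-part splitting together with the integrability bound $\mathbb{E}\big(|\Phi(X_0)|\,e^{\int_0^{T^s_{n-1,n}}F(u,X_0)du}\big)\le\prod_{i=0}^{n}\|f_i\|_\infty$, obtained exactly by the bookkeeping you allude to; the paper makes the same silent extension, so this is consistent with its level of rigor, but it deserves a sentence.
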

\begin{proof} Let $B\in\mathcal{E}$. By the Markov property and Theorem \ref{mdim} we have
\begin{align*}
\mathbb{E}&\Big(f_0(X_0)f_1(X_{t_1})\cdots f_n(X_{t_n})1_{\{X_{T^s_{n,n}}\in B\}}\Big)\\
&= \mathbb{E}\Big(f_0(X_0)f_1(X_{t_1})\cdots f_n(X_{t_n})\mathbb{P}_{X_{t_n}}\big(X_{T^s_{n-1,n}}\in B\big)\Big)\\
	&= \mathbb{E}\Big(f_n(X_0)e^{\int_0^{t_n-t_{n-1}}F(u,X_0)du}\mathbb{P}_{X_{0}}\big(X_{T^s_{n-1,n}}\in B\big)\\
	&\qquad \times \Lambda_{t_n-t_{n-1}}\big(f_{n-1}(\cdot)e^{\int_0^{t_{n-1}-t_{n-2}}F(u,\cdot)du}\Lambda_{t_{n-1}-t_{n-2}}\big(\cdots \Lambda_{t_1}f_0\big)\big)(X_0)\Big)\\
	&= \mathbb{E}\Big(f_n(X_0)e^{\int_0^{t_n-t_{n-1}}F(u,X_0)du}\mathbb{E}\big(1_{\{X_{T^s_{n-1,n}}\in B\}}\big|\mathcal{F}^X_0\big)\\
	&\qquad \times \Lambda_{t_n-t_{n-1}}\big(f_{n-1}(\cdot)e^{\int_0^{t_{n-1}-t_{n-2}}F(u,\cdot)du}\Lambda_{t_{n-1}-t_{n-2}}\big(\cdots \Lambda_{t_1}f_0\big)\big)(X_0)\Big)\\
	&= \mathbb{E}\Big(1_{\{X_{T^s_{n-1,n}}\in B\}}f_n(X_0)e^{\int_0^{t_n-t_{n-1}}F(u,X_0)du}\\
	&\qquad \times \Lambda_{t_n-t_{n-1}}\big(f_{n-1}(\cdot)e^{\int_0^{t_{n-1}-t_{n-2}}F(u,\cdot)du}\Lambda_{t_{n-1}-t_{n-2}}\big(\cdots \Lambda_{t_1}f_0\big)\big)(X_0)\Big).
	\end{align*}
		Rewriting the last expression by conditioning with respect to $X_{s + t_1 + \ldots + t_{n-1}}$ we obtain
	\allowdisplaybreaks
	\begin{align*}
	 \mathbb{E}&\Big(\mathbb{E}\big(f_n(X_0)e^{\int_0^{t_n-t_{n-1}}F(u,X_0)du}\Lambda_{t_n-t_{n-1}}\big(f_{n-1}(\cdot)e^{\int_0^{t_{n-1}-t_{n-2}}F(u,\cdot)du}\\
	&\quad \times \ \Lambda_{t_{n-1}-t_{n-2}}\big(\cdots \Lambda_{t_1}f_0\big)\big)(X_0)\big|X_{T^s_{n-1,n}}\big) \ 1_{\{X_{T^s_{n-1,n}}\in B\}}\Big)\\
	&= \int_B\Lambda_{T^s_{n-1,n}}\Big(f_n(X_0)e^{\int_0^{t_n-t_{n-1}}F(u,X_0)du}\\
	&\quad \times \Lambda_{t_n-t_{n-1}}\big(f_{n-1}(\cdot)e^{\int_0^{t_{n-1}-t_{n-2}}F(u,\cdot)du}\Lambda_{t_{n-1}-t_{n-2}}\big(\cdots \Lambda_{t_1}f_0\big)\big)(z) \Big)
	   \mathbb{P}\big(X_{T^s_{n-1,n}}\in dz\big).
\end{align*}
Denoting
\begin{align*}
	R_{s,t_1,\ldots,t_{n}}(z) &= \Lambda_{T^s_{n-1,n}}\big(f_n(X_0)e^{\int_0^{t_n-t_{n-1}}F(u,X_0)du}\\
	& \times \Lambda_{t_n-t_{n-1}}\big(f_{n-1}(\cdot)e^{\int_0^{t_{n-1}-t_{n-2}}F(u,\cdot)du}\Lambda_{t_{n-1}-t_{n-2}}\big(\cdots \Lambda_{t_1}f_0\big)\big)(z),
\end{align*}
we conclude after comparing the first and the last term of the above sequence of equalities that
\begin{align}\label{R}
\mathbb{E}\big(f_1(X_{t_1})\cdots f_n(X_{t_n})&1_{\{X_{T^s_{n,n}}\in B\}}\big) = \int_BR_{s,t_1,\ldots,t_{n}}(z)\mu_{T^s_{n-1,n}}(dz).
\end{align}
On the other hand
\begin{align}\label{S1}
\mathbb{E}\big(f_1(X_{t_1})&\cdots f_n(X_{t_n})1_{\{X_{T^s_{n,n}}\in B\}}\big)\notag\\
	&= \mathbb{E}\big(\mathbb{E}\big(f_1(X_{t_1})\cdots f_n(X_{t_n})\big|X_{T^s_{n,n}}\big)1_{\{X_{T^s_{n,n}}\in B\}}\big),
\end{align}
so denoting
\begin{align*}
	S_{s,t_1,\ldots,t_{n}}(z) = \mathbb{E}\big(f_1(X_{t_1})&\cdots f_n(X_{t_n})\big|X_{T^s_{n,n}} = z\big), \quad z\in E,
\end{align*}
we can write  (\ref{S1}) in the form
\begin{align*}
\mathbb{E}\Big(f_1(X_{t_1})\cdots f_n(X_{t_n})&1_{\{X_{T^s_{n,n}}\in B\}}\Big) = \int_BS_{s,t_1,\ldots,t_{n}}(z)\mu_{T^s_{n,n}}(dz).
\end{align*}
Compare this and (\ref{R}) 
 to obtain
\begin{align*}
\int_BR_{s,t_1,\ldots,t_{n}}(z)\mu_{T^s_{n-1,n}}(dz) = \int_BS_{s,t_1,\ldots,t_{n}}(z)\mu_{T^s_{n,n}}(dz),
\end{align*}
which by the FOED property is equivalent to
\begin{align*}
\int_BR_{s,t_1,\ldots,t_{n}}(z)\mu_{T^s_{n-1,n}}(dz) = \int_BS_{s,t_1,\ldots,t_{n}}(z)e^{\int_{T^s_{n-1,n}}^{{T^s_{n,n}}}F(u,z)du}\mu_{T^s_{n-1,n}}(dz).
\end{align*}
The arbitrary choice of $B$ yields $z$- a.e.
\begin{align*}
	S_{s,t_1,\ldots,t_{n}}(z) = e^{-\int_{T^s_{n-1,n}}^{{T^s_{n,n}}}F(u,z)du}R_{s,t_1,\ldots,t_{n}}(z),
\end{align*}
which is exactly our assertion.
\end{proof}

 In the next theorem we give a computation of the conditional structure \eqref{cs-Psi}.
We start with an auxiliary result.
\begin{lemma}\label{rela}
Let  $X$ be a Markov process having the FOED property.
	For any $s,t > 0$ and $x\in E$ we have
\begin{align}
	p_{s+t}(x,w)\mathbb{P}(X_{s}\in dw) = p_{s}(x,w)\mathbb{P}(X_{t+s}\in dw).
\end{align}
\end{lemma}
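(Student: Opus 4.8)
The plan is to reduce the asserted identity of measures in $w$ to a single pointwise relation between transition densities, and then to obtain that relation from the FOED shift identity of Theorem~\ref{m_FOD}. First I would invoke Proposition~\ref{prop2.4}(b) in the form \eqref{abscon}, which for the one-dimensional marginals $\mu_r(dw)=\mathbb{P}(X_r\in dw)$ gives
\[
\mu_{t+s}(dw)=\mu_s(dw)\,e^{\int_s^{s+t}F(u,w)\,du}.
\]
Since under the standing assumptions $E$ is the support of every $\mu_r$, dividing the claimed equality $p_{s+t}(x,w)\mu_s(dw)=p_s(x,w)\mu_{t+s}(dw)$ by the reference measure $\mu_s(dw)$ shows that the statement is equivalent, for the fixed $x$, to the pointwise relation
\[
p_{s+t}(x,w)=p_s(x,w)\,e^{\int_s^{s+t}F(u,w)\,du},\qquad \mu_s\text{-a.e. }w.
\]
Thus the whole task is to establish this density identity.

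To prove it I would test against an arbitrary bounded Borel function $f$ and read each weighted marginal as an expectation. For the fixed $x$ the maps $w\mapsto p_{s+t}(x,w)$ and $w\mapsto p_s(x,w)$ are Borel, so that
\[
\int_E f(w)\,p_{s+t}(x,w)\,\mu_s(dw)=\mathbb{E}\big[f(X_s)\,p_{s+t}(x,X_s)\big],
\]
\[
\int_E f(w)\,p_s(x,w)\,\mu_{t+s}(dw)=\mathbb{E}\big[f(X_{s+t})\,p_s(x,X_{s+t})\big].
\]
Now I would apply Theorem~\ref{m_FOD}: choosing the $\mathcal{F}^X_\infty$-measurable variable $Z=f(X_0)p_{s+t}(x,X_0)$ one has $Z\circ\theta_s=f(X_s)p_{s+t}(x,X_s)$, hence
\[
\mathbb{E}\big[f(X_s)\,p_{s+t}(x,X_s)\big]=\mathbb{E}\big[f(X_0)\,p_{s+t}(x,X_0)\,e^{\int_0^sF(u,X_0)\,du}\big],
\]
while choosing $W=f(X_0)p_s(x,X_0)$ gives $W\circ\theta_{s+t}=f(X_{s+t})p_s(x,X_{s+t})$, hence
\[
\mathbb{E}\big[f(X_{s+t})\,p_s(x,X_{s+t})\big]=\mathbb{E}\big[f(X_0)\,p_s(x,X_0)\,e^{\int_0^{s+t}F(u,X_0)\,du}\big].
\]

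Both right-hand sides are integrals of a function of $X_0$ against $\gamma$, so equality for every $f$ is equivalent to the $\gamma$-a.e. identity $p_{s+t}(x,y)\,e^{\int_0^sF(u,y)du}=p_s(x,y)\,e^{\int_0^{s+t}F(u,y)du}$, which is precisely the pointwise relation isolated in the first step. The remaining part is, I expect, the main obstacle: securing this transition-density identity. The natural device is to feed the Chapman--Kolmogorov equation $p_{s+t}(x,w)=\int_E p_s(x,y)p_t(y,w)\,dy$ into the FOED evolution \eqref{abscon}, so that the mass accumulated between times $s$ and $s+t$ is accounted for exactly by the factor $e^{\int_s^{s+t}F(u,w)du}$. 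Two technical points will need care. First, Theorem~\ref{m_FOD} is being applied to the possibly unbounded functions $w\mapsto p_r(x,w)$, so I would truncate $f$ and $p_r(x,\cdot)$, apply the identity to the bounded truncations, and pass to the limit using the local integrability of $F(\cdot,y)$ and the integrability hypotheses already in force. Second, the conclusion is only meaningful $\mu_s$-almost everywhere in $w$ (equivalently Lebesgue-a.e.), in keeping with the measure-theoretic formulation of the statement.
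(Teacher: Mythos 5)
Your proposal is not yet a proof: everything in it is a (correct) chain of equivalent reformulations, and the one substantive assertion is left unproven. Both of your reductions --- dividing by $\mu_s(dw)$ after invoking \eqref{abscon}, and testing against bounded $f$ via the shift identity of Theorem~\ref{m_FOD} --- establish the same thing, namely that Lemma~\ref{rela} is equivalent, for fixed $x$, to the pointwise relation
\[
p_{s+t}(x,w) \;=\; p_s(x,w)\,e^{\int_s^{s+t}F(u,w)\,du}\qquad \gamma\text{-a.e. } w .
\]
At that point you stop, saying the ``natural device'' is to feed Chapman--Kolmogorov into \eqref{abscon}; no such argument is carried out, so the crucial step is missing. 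For comparison, the paper's own proof consists precisely of asserting this relation: it passes from the marginal identity $\mu_{s+t}(dw)=\mu_s(dw)\,e^{\int_s^{s+t}F(u,w)du}$ (which is only an equality of the $\gamma(dx)$-integrals of $p_{s+t}(x,\cdot)$ and of $p_s(x,\cdot)e^{\int_s^{s+t}F(u,\cdot)du}$) to equality of the integrands $\gamma(dx)$-a.e., and then multiplies back by $\mu_s(dw)$. So the step you flag as the main obstacle is exactly the step the paper treats as immediate.

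Your instinct that this is the real difficulty is sound, and in fact the gap cannot be closed by Chapman--Kolmogorov or anything else: the pointwise relation is not a consequence of the Markov and FOED properties, because it fails for the Gauss--Gauss process of Example~\ref{GGex-j}. There $p_r(x,w)=(2\pi r)^{-1/2}e^{-(w-x)^2/(2r)}$, so $p_{s+t}(x,w)/p_s(x,w)=\sqrt{s/(s+t)}\,e^{(w-x)^2t/(2s(s+t))}$ depends on $x$, whereas the FOED factor $e^{\int_s^{s+t}F_a(u,w)du}=\sqrt{(a+s)/(a+s+t)}\,e^{w^2t/(2(a+s)(a+s+t))}$ does not. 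Concretely, at $x=0$ and $s=t=a=1$ the two sides of the display in Lemma~\ref{rela} are Gaussian-type densities with exponents $-\frac{w^2}{2}\bigl(\frac{1}{s+t}+\frac{1}{a+s}\bigr)=-\frac{w^2}{2}$ and $-\frac{w^2}{2}\bigl(\frac{1}{s}+\frac{1}{a+s+t}\bigr)=-\frac{2w^2}{3}$, which differ. Equality of integrals over $x$ against $\gamma$ simply does not force equality of integrands, and Chapman--Kolmogorov supplies no additional rigidity. In short: your reformulations are correct and isolate the crux, but the proposal is incomplete as a proof, and the identity it (like the paper's argument) would need does not follow from the stated hypotheses.
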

\begin{proof}  By the FOED property, and the time homogeneity of $X$, we have for any $s,t > 0$
\begin{align*}
	\mathbb{P}(X_{s+t}\in dw) = \mathbb{P}(X_{s}\in dw)e^{\int_s^{t+s}F(u,w)du},
\end{align*}
which actually means that
\begin{align*}
	p_{s+t}(x,w)\gamma(dx) = p_{s}(x,w)e^{\int_s^{t+s}F(u,w)du}\gamma(dx).
\end{align*}
This yields $\gamma$- a.s.
\begin{align*}
	p_{s+t}(x,w) = p_{s}(x,w)e^{\int_s^{t+s}F(u,w)du},
\end{align*}
so
\begin{align*}
	p_{s+t}(x,w)\mathbb{P}(X_{s}\in dw) = p_{s}(x,w)e^{\int_s^{t+s}F(u,w)du}\mathbb{P}(X_{s}\in dw),
\end{align*}
which by the FOED property, and the time homogeneity of $X$, becomes
\begin{align*}
	p_{s+t}(x,w)\mathbb{P}(X_{s}\in dw) = p_{s}(x,w)\mathbb{P}(X_{t+s}\in dw).
\end{align*}
The proof is complete.
\end{proof}

For a transition density $p_t(\cdot,\cdot)$ and a measure $\gamma$ we will use the notation
\begin{align*}
	\gamma^{n+1}(d(z_0,\ldots,z_n)) = \gamma(dz_0)\otimes\ldots\otimes\gamma(dz_n), \qquad g_t(z) = \int_E p_t(v,z)\gamma(dv).
\end{align*}

\begin{theorem} Assume that $X$ is a Markov process having the FOED property with the initial distribution $\gamma$ and the transition density $p_t(\cdot,\cdot)$. Let $n\in\mathbb{N}$, $s\geq 0$ and
 $0 = t_0 < t_1<\ldots<t_n$. Let  $\Psi:E^{n+1}\rightarrow \mathbb{R}$ be a Borel function such that $\mathbb{E}|\Psi(X_{0},\ldots,X_{t_n})| < \infty$.  Then
\begin{align}\label{Psiforsym}
\mathbb{E}&\big(\Psi\big(X_{0},\ldots,X_{t_n}\big)\big| X_{T^s_{n,n}} = w \big)\notag\\
&= e^{-\int_{T^s_{n-1,n}}^{T^s_{n,n}}F(u,w)du}\int_{E^{n+1}}\Psi(z_0,\ldots,z_n)\prod_{i=1}^{n-1}\frac{p_{t_i}(z_{i-1},z_i)}{g_{t_i}(z_{i-1})}\frac{p_{t_n}(z_n,w)}{g_{t_n}(w)}\notag\\
&\times e^{\sum_{i=1}^n\int_0^{t_i-t_{i-1}}F(u,z_i)du}\gamma^{n+1}(d(z_0,\ldots,z_n)).
\end{align}
\end{theorem}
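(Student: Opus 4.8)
The plan is to read the conditional expectation off the joint law of the enlarged vector $(X_0,X_{t_1},\ldots,X_{t_n},X_{T^s_{n,n}})$ and then to reorganize the resulting transition densities by means of Lemma~\ref{rela} together with the FOED identity $e^{\int_s^t F(u,z)\,du}=g_t(z)/g_s(z)$ that follows from \eqref{abscon}. First I would fix a Borel set $B$ and compute $\mathbb{E}\big(\Psi(X_0,\ldots,X_{t_n})1_{\{X_{T^s_{n,n}}\in B\}}\big)$ directly. Since $0<t_1<\cdots<t_n<T^s_{n,n}$ and $X$ is Markov with transition density $p$, the joint law disintegrates along the increments, so this expectation equals $\int_B\big[\int_{E^{n+1}}\Psi(z_0,\ldots,z_n)\,\gamma(dz_0)\prod_{i=1}^n p_{t_i-t_{i-1}}(z_{i-1},z_i)\,dz_i\; p_{T^s_{n-1,n}}(z_n,w)\big]\,dw$, the interchange of integrals being justified by $\mathbb{E}|\Psi(X_0,\ldots,X_{t_n})|<\infty$ and Fubini. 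Dividing the inner integral by the marginal density $g_{T^s_{n,n}}(w)=\int_E p_{T^s_{n,n}}(v,w)\gamma(dv)$ identifies, for a.e.\ $w$, the conditional expectation $\mathbb{E}(\Psi\,|\,X_{T^s_{n,n}}=w)$. This step uses only the Markov property and the existence of a transition density, and it already delivers a formula written in terms of $p$. (Alternatively one could start from Theorem~\ref{mltifo} for product functions $\Psi=\prod_i f_i$, unfold each bridge operator via \eqref{Lmt}, and then extend to general integrable $\Psi$ by the monotone class theorem; this produces the same intermediate expression.)

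Second, I would convert the increment densities into the cumulative-time densities that appear in the statement. Reading Lemma~\ref{rela} as the reparametrization identity $p_{a+b}(x,w)\,g_a(w)=p_a(x,w)\,g_{a+b}(w)$, I can replace each increment factor $p_{t_i-t_{i-1}}(z_{i-1},z_i)$ by $\tfrac{p_{t_i}(z_{i-1},z_i)}{g_{t_i}(z_i)}\,g_{t_i-t_{i-1}}(z_i)$, and the conditioning factor $p_{T^s_{n-1,n}}(z_n,w)$ by $\tfrac{p_{t_n}(z_n,w)}{g_{t_n}(w)}\,g_{T^s_{n-1,n}}(w)$. The FOED identity then turns $g_{t_i-t_{i-1}}(z_i)\,dz_i$ into $e^{\int_0^{t_i-t_{i-1}}F(u,z_i)\,du}\,\gamma(dz_i)$ (recall $\gamma(dz)=g(z)\,dz$ and $g_{t_i-t_{i-1}}(z_i)/g(z_i)=e^{\int_0^{t_i-t_{i-1}}F(u,z_i)\,du}$), so every Lebesgue factor is absorbed into the product measure $\gamma^{n+1}$ and an exponential weight. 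Collecting the weights yields $e^{\sum_{i=1}^n\int_0^{t_i-t_{i-1}}F(u,z_i)\,du}$, assembling the $\gamma(dz_i)$ together with $\gamma(dz_0)$ yields $\gamma^{n+1}(d(z_0,\ldots,z_n))$, and the ratio $g_{T^s_{n-1,n}}(w)/g_{T^s_{n,n}}(w)$ left over from the conditioning step is exactly the prefactor $e^{-\int_{T^s_{n-1,n}}^{T^s_{n,n}}F(u,w)\,du}$. This reproduces the asserted identity.

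The hard part will be the bookkeeping in this second step. One must apply Lemma~\ref{rela} with the correct spatial argument: the reparametrization acts on the terminal point, so the $g$-factors it produces land at $z_i$ and at $w$ (not at the initial point $z_{i-1}$), and getting these arguments right is essential for the Lebesgue measures to collapse cleanly into $\gamma^{n+1}$. In addition, the bulk increments $i<n$ must be treated slightly differently from the final conditioning increment of length $T^s_{n-1,n}$, whose $g$-ratio becomes the $w$-dependent prefactor rather than being absorbed into the product measure. Tracking which $g$-factor cancels against which, so that the weights telescope into a single exponential while each transition density is re-timed to its cumulative value, is the only genuinely delicate point; everything else is the standard disintegration of a Markov law combined with the already-established relations \eqref{abscon} and Lemma~\ref{rela}.
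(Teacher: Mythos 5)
Your main route is genuinely different from the paper's and, as a strategy, cleaner. The paper proves \eqref{Psiforsym} by specializing to product functions $\Psi=f_0\otimes\cdots\otimes f_n$, invoking Theorem~\ref{mltifo}, unfolding the nested bridge operators through the kernels $\eta_{t,w}(dv)=p_t(v,w)\gamma(dv)/g_t(w)$ coming from \eqref{Lmt}, re-timing via Lemma~\ref{rela}, and finally extending to integrable $\Psi$ by a monotone-class argument. You instead disintegrate the joint law of $(X_0,X_{t_1},\ldots,X_{t_n},X_{T^s_{n,n}})$ by the Markov chain rule and divide by the marginal density $g_{T^s_{n,n}}(w)$; this treats all integrable $\Psi$ in one stroke, uses the FOED property only in the re-timing step, and exposes the theorem as nothing more than ``chain rule plus Lemma~\ref{rela} plus \eqref{abscon}''. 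Your parenthetical remark correctly identifies the paper's route as the alternative.

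However, your closing claim that this ``reproduces the asserted identity'' is not accurate, and this is the one genuine defect of the write-up. Carried out correctly --- and your bookkeeping \emph{is} correct --- your computation yields
\begin{align*}
e^{-\int_{T^s_{n-1,n}}^{T^s_{n,n}}F(u,w)du}\int_{E^{n+1}}\Psi(z_0,\ldots,z_n)
\prod_{i=1}^{n}\frac{p_{t_i}(z_{i-1},z_i)}{g_{t_i}(z_i)}\,
\frac{p_{t_n}(z_n,w)}{g_{t_n}(w)}\,
e^{\sum_{i=1}^n\int_0^{t_i-t_{i-1}}F(u,z_i)du}\,
\gamma^{n+1}\big(d(z_0,\ldots,z_n)\big),
\end{align*}
that is, the product runs up to $i=n$ and each $g_{t_i}$ sits at the \emph{terminal} point $z_i$, exactly as you insist it must. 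This is not the printed display: there the product stops at $i=n-1$ (so for $n=1$ the formula contains no factor coupling $z_0$ to $z_1$ at all, which is plainly impossible) and $g_{t_i}$ is evaluated at the initial point $z_{i-1}$, which Lemma~\ref{rela} does not give. So what you prove is a corrected version of the statement, and you must say so explicitly rather than assert agreement. For context: the paper's own proof commits precisely the index shift you warn against --- in its $n=3$ display and in \eqref{thetan} the denominators $g_{t_i-t_{i-1}}(z_{i-1})$ should read $g_{t_i-t_{i-1}}(z_i)$, resp.\ $g_{t_n-t_{n-1}}(w)$ --- and, when the outermost bridge operator is assembled, it also loses the factor coupling $z_{n-1}$ to $z_n$; the printed theorem inherits both slips.

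One further caveat, affecting your proof and the paper's equally: everything hinges on Lemma~\ref{rela}, i.e.\ on $p_a(x,w)/g_a(w)$ not depending on $a>0$. The paper derives that lemma by promoting an identity of mixtures (integrated in $x$ against $\gamma$) to an identity of kernels, which is not a valid inference, and the conclusion in fact fails for the paper's own Gauss--Gauss example: there $p_{s+t}(x,w)/p_s(x,w)$ depends on $x$ through $(w-x)^2$, while $g_{s+t}(w)/g_s(w)$ does not, so $p_{s+t}(x,w)g_s(w)\neq p_s(x,w)g_{s+t}(w)$. Quoting Lemma~\ref{rela} is legitimate within the paper's framework, but be aware that your re-timing step --- and hence the theorem itself --- requires hypotheses beyond the FOED property alone.
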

\begin{proof} First we prove that formula (\ref{Psiforsym}) holds for $$\Psi(z_0,\ldots,z_n) = f_0(z_0)\cdots f_n(z_n),$$ where $f_0,\ldots, f_n$ are bounded Borel functions.
	
  For a  bounded Borel function $f$ we will determine the form of $\Lambda_tf$.
By (\ref{Lmt}) and Proposition \ref{Ex1} we have, for any $t>0$ and $w\in E$,
\begin{align*}
	\Lambda_tf(w) = \int_Ef(v)\frac{p_t(v,w)\gamma(dv)}{\int_Ep_t(r,w)\gamma(dr)}.
\end{align*}
 Let us define a function $(t,w) \rightarrow \eta_{t,w}$, where $\eta_{t,w}$ is a measure on $[0,\infty)$ given by
\begin{align}\label{peta}
	\eta_{t,w}(dv) = \frac{p_t(v,w)\gamma(dv)}{\int_Ep_t(r,w)\gamma(dr)},
\end{align}
 so
\begin{align}\label{Leta}
\Lambda_tf(w) = \int_Ef(v)\eta_{t,w}(dv).
\end{align}

Now, we compute the succesive inner components of the RHS of (\ref{csfd}). By (\ref{Leta}) we have
\begin{align}\label{1igrd}
	\Lambda_{t_2-t_1}\big(f_1(\cdot)e^{\int_0^{t_1}F(u,\cdot)du}\Lambda_{t_1}f_0\big)(w) = \int_Ef_1(v)e^{\int_0^{t_1}F(u,v)}\Lambda_{t_1}f_0(v)\eta_{t_2-t_1,w}(dv).
\end{align}
Next,     for $n=3$, using   formulas (\ref{1igrd}) and (\ref{Leta}) we obtain
\begin{align*}
&\Lambda_{t_3-t_2}\big(f_2(\cdot)e^{\int_0^{t_2-t_1}F(u,\cdot)du}\Lambda_{t_2-t_1}\big(f_1(\cdot)e^{\int_0^{t_1}F(u,\cdot)du}\Lambda_{t_1}f_0\big)\big)(w) \\
&= \Lambda_{t_3-t_2}\Big(f_2(\cdot)e^{\int_0^{t_2-t_1}F(u,\cdot)du}\Big(\int_Ef_1(z_1)e^{\int_0^{t_1}F(u,z_1)du}\Lambda_{t_1}f_0(z_1)\eta_{t_2-t_1,\cdot}(dz_1)\Big)\Big)(w)\\
&= \int_E\Big(f_2(z_2)e^{\int_0^{t_2-t_1}F(u,z_2)du}\Big(\int_Ef_1(z_1)e^{\int_0^{t_1}F(u,z_1)du}\Lambda_{t_1}f_0(z_1)\eta_{t_2-t_1,z_2}(dz_1)\Big)\Big)\eta_{t_3-t_2,w}(dz_2)\\
&= \int_E\Big(f_2(z_2)e^{\int_0^{t_2-t_1}F(u,z_2)du}\Big(\int_Ef_1(z_1)e^{\int_0^{t_1}F(u,z_1)du}\\
& \qquad \qquad \qquad \qquad \times \int_Ef_0(z_0)\eta_{t_1,z_1}(dz_0)\eta_{t_2-t_1,z_2}(dz_1)\Big)\Big)\eta_{t_3-t_2,w}(dz_2),
\end{align*}
which, using Fubini theorem, we rewrite as
\begin{align*}
	\int_E\int_E\int_Ef_2(z_2)f_1(z_1)f_0(z_0)e^{\int_0^{t_2-t_1}F(u,z_2)du + \int_0^{t_1}F(u,z_1)du}\eta_{t_1,z_1}(dz_0)\eta_{t_2-t_1,z_2}(dz_1)\eta_{t_3-t_2,w}(dz_2).
\end{align*}
By formula (\ref{peta}) and the form of $g_t$ this is equal to
\begin{align*}	
\int_E&\int_E\int_Ef_2(z_2)f_1(z_1)f_0(z_0)e^{\int_0^{t_2-t_1}F(u,z_2)du + \int_0^{t_1}F(u,z_1)du}\\
&\times \quad \frac{p_{t_3-t_2}(z_2,w)p_{t_2-t_1}(z_1,z_2)p_{t_1}(z_0,z_1)}{g_{t_3-t_2}(z_2)g_{t_2-t_1}(z_1)g_{t_1}(z_0)}\gamma(dz_2)\gamma(dz_1)\gamma(dz_0).
\end{align*}
  Continuing in an analogous way we get the formula
\begin{align}\label{thetan}
	&\Lambda_{t_n-t_{n-1}}\big(f_{n-1}(\cdot)e^{\int_0^{t_{n-1}-t_{n-2}}F(u,\cdot)du}\Lambda_{t_{n-1}-t_{n-2}}\big(\ldots\big)\big)(w)\notag\\
	&= \int_E\ldots\int_Ef_{n-1}(z_{n-1})\ldots f_0(z_0)e^{\sum_{i=1}^{n-1}\int_0^{t_i - t_{i-1}}F(u,z_i)du}\frac{\prod_{i=1}^{n-1}p_{t_i - t_{i-1}}(z_{i-1},z_i)}{\prod_{i=1}^{n-1}g_{t_i-t_{i-1}}(z_{i-1})}\notag\\
	&\quad\quad \times\frac{p_{t_n - t_{n-1}}(z_{n-1},w)}{g_{t_{n}-t_{n-1}}(z_{n-1})}\gamma(dz_0)\ldots\gamma(dz_{n-1}).
\end{align}
Denote the last expression by $\Theta(n)(w)$. Using (\ref{Leta}) we compute
\begin{align*}
	\Lambda_{T^s_{n-1,n}}&\Big(f_n(\cdot)e^{\int_0^{t_n-t_{n-1}}F(u,\cdot)du}\Theta(n)(\cdot)\Big)(w)\\
	&= \int_Ef_n(z_n)e^{\int_0^{t_n-t_{n-1}}F(u,z_n)du}\Theta(n)(z_n)\frac{p_{T^s_{n-1,n}}(z_n,w)}{g_{T^s_{n-1,n}}(w)}\gamma(dz_n).
\end{align*}

Hence, using formula (\ref{thetan}) we conclude from  (\ref{csfd}) that
\begin{align*}
	\mathbb{E}&\big(f_1(X_{t_1})\cdots f_n(X_{t_n})\big| X_{T^s_{n,n}} = w \big)\\
	&= e^{-\int_{T^s_{n-1,n}}^{T^s_{n,n}}F(u,w)du}\int_{E^{n+1}}f_0(z_0)\cdots f_n(z_n)\frac{\prod_{i=1}^{n-1}p_{t_i - t_{i-1}}(z_{i-1},z_i)}{\prod_{i=1}^{n-1}g_{t_i-t_{i-1}}(z_{i-1})}\frac{p_{T^s_{n-1,n}}(z_n,w)}{g_{T^s_{n-1,n}}(w)}\notag\\
&\times e^{\sum_{i=1}^n\int_0^{t_i-t_{i-1}}F(u,z_i)du}\gamma^{n+1}(d(z_0,\ldots,z_n)).
\end{align*}
Using   Lemma \ref{rela} we find that
\begin{align*}
\frac{p_{t_i - t_{i-1}}(z_{i-1},z_i)}{g_{t_i-t_{i-1}}(z_{i-1})} = \frac{p_{t_i}(z_{i-1},z_i)}{g_{t_i}(z_{i-1})}, \qquad \quad	\frac{p_{T^s_{n-1,n}}(z_n,w)}{g_{T^s_{n-1,n}}(w)} = \frac{p_{t_n}(z_n,w)}{g_{t_n}(w)},
\end{align*}
so
  formula (\ref{Psiforsym}) holds for $$\Psi(z_0,\ldots,z_n) = f_0(z_0)\cdots f_n(z_n),$$ where $f_0,\ldots, f_n$ are bounded Borel functions.

  The general  case, that is (\ref{Psiforsym}) for a Borel functions $\Psi:E^{n+1}\rightarrow \mathbb{R}$ such that $\mathbb{E}|\Psi(X_{0},\ldots,X_{t_n})| < \infty$ follows by using a monotone convergence theorem.

 The proof is complete.
\end{proof}

\section{FOED and Kolmogorov distance: example}

In our considerations in this section we use the FOED property to solve the following problem: Consider   a Brownian motion $B$ with the  initial distribution $\gamma$  on $\mathbb{R}$. Suppose that $X$ is a Markov process having the  FOED property. Our task is to measure the similarity between   $B_t$ and $X_t$, for fixed $t$, if $X_0$ has a distribution  $\gamma$.
 In statistics, the Kolmogorov distance is a convenient way of measuring similarity of distributions (see \cite{Wil} for an overview). For a couple of nice recent results measuring the Kolmogorov distance between the general Poisson random variables and Gaussian distribution see for example \cite{4mom}.
  Recall that the Kolmogorov distance between  real valued random variables $X$ and $Y$ is defined as
\begin{align*}
	d_{Kol}(X,Y) = \sup_{x\in\mathbb{R}}\big|\mathbb{P}(X\leq x) - \mathbb{P}(Y\leq x)\big|.
\end{align*}
    We will show that for computation Kolmogorov distance $d_{Kol}(X_t,B_t)$ it suffices to consider the supremum over a special set that is related to the FOED function and the distribution  $\gamma$.  We use below the common notation: by $\Phi$ we denote the cumulative distribution function of a standard Gaussian random variable $G$ and by $\phi$ the p.d.f of $G$.

\begin{theorem}\label{KDi} Suppose that $X$ is a Markov process having the  FOED property with the function of evolution of distribution $F$, $X_0$ has a distribution $\gamma$ and a differentiable density $g$. Let $B$ be    a Brownian motion with the  initial distribution $\gamma$. Fix  $t>0$. Let $\mathcal{X}_0$ be the set of solutions of   the following equation with respect to variable~$x$
\begin{align} \label{xocon}
g(x)e^{\int_0^tF(u,x)du} = \mathbb{E}1_{\{G\leq 0\}}\big(g(x - \sqrt{t}G) + g(x + \sqrt{t}G)\big).
\end{align}
Then
 \begin{align}\label{expkol}
d_{Kol}&(X_t,B_t)\\
 &= \sup_{x_0\in \mathcal{X}_0}\Big|\int_{-\infty}^{x_0}\Big(e^{\int_0^tF(u,z)du} - \Phi\Big(\frac{x_0-z}{\sqrt{t}}\Big)\Big)g(z)dz - \int_{x_0}^{\infty}\Phi\Big(\frac{x_0-z}{\sqrt{t}}\Big)g(z)dz\Big|.\notag
\end{align}
%
\end{theorem}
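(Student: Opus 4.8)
The plan is to write the signed difference of the two cumulative distribution functions, show that the expression inside the absolute value in \eqref{expkol} is exactly this difference evaluated at $x_0$, and then argue that the points of $\mathcal{X}_0$ are precisely the critical points of this difference, at which its extrema must occur.

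First I would record the two laws. By Proposition \ref{prop2.4}(b) the random variable $X_t$ has density $z\mapsto g(z)e^{\int_0^tF(u,z)du}$, so that $\mathbb{P}(X_t\le x)=\int_{-\infty}^x g(z)e^{\int_0^tF(u,z)du}\,dz$. On the other hand $B_t=B_0+\sqrt t\,G$ with $B_0\sim\gamma$ independent of a standard Gaussian $G$, whence, conditioning on $B_0$, $\mathbb{P}(B_t\le x)=\int_{-\infty}^{\infty}\Phi\big(\tfrac{x-z}{\sqrt t}\big)g(z)\,dz$. Setting $H(x):=\mathbb{P}(X_t\le x)-\mathbb{P}(B_t\le x)$ and splitting the last integral at $z=x$ gives
\begin{align*}
H(x)=\int_{-\infty}^x\Big(e^{\int_0^tF(u,z)du}-\Phi\big(\tfrac{x-z}{\sqrt t}\big)\Big)g(z)\,dz-\int_x^{\infty}\Phi\big(\tfrac{x-z}{\sqrt t}\big)g(z)\,dz,
\end{align*}
which is precisely the quantity inside the absolute value in \eqref{expkol}. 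Thus $d_{Kol}(X_t,B_t)=\sup_{x\in\mathbb{R}}|H(x)|$, and it remains to see that this supremum may be restricted to $\mathcal{X}_0$.

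Next I would reduce the supremum to the critical set. Both $\mathbb{P}(X_t\le\cdot)$ and $\mathbb{P}(B_t\le\cdot)$ are continuous distribution functions tending to $0$ at $-\infty$ and to $1$ at $+\infty$, so $H$ is continuous with $H(x)\to 0$ as $x\to\pm\infty$. Consequently, if $H\not\equiv 0$ then $\sup_x|H(x)|=\max\big(\sup_x H(x),-\inf_x H(x)\big)$ is attained at a finite point $x^\ast$ that is an interior local extremum of $H$, hence satisfies $H'(x^\ast)=0$; and if $H\equiv 0$ both sides of \eqref{expkol} vanish. Therefore $\sup_{x\in\mathbb{R}}|H(x)|=\sup\{|H(x)|:H'(x)=0\}$, and it only remains to identify the equation $H'(x)=0$ with \eqref{xocon}.

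Finally I would differentiate. Since $g$ is differentiable and the Gaussian convolution defining $B_t$ is smooth, $H$ is continuously differentiable with $H'(x)=g(x)e^{\int_0^tF(u,x)du}-p_{B_t}(x)$, where $p_{B_t}(x)=\mathbb{E}\,g(x-\sqrt t\,G)$ is the density of $B_t$. Using the symmetry $G\overset{d}{=}-G$ together with $\mathbb{P}(G=0)=0$, I would split $\mathbb{E}\,g(x-\sqrt t\,G)=\mathbb{E}\big[1_{\{G\le 0\}}g(x-\sqrt t\,G)\big]+\mathbb{E}\big[1_{\{G>0\}}g(x-\sqrt t\,G)\big]$ and rewrite the second term, after the substitution $G\mapsto -G$, as $\mathbb{E}\big[1_{\{G\le 0\}}g(x+\sqrt t\,G)\big]$, obtaining $p_{B_t}(x)=\mathbb{E}1_{\{G\le 0\}}\big(g(x-\sqrt t\,G)+g(x+\sqrt t\,G)\big)$. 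Hence $H'(x)=0$ is exactly equation \eqref{xocon}, so $\mathcal{X}_0=\{x:H'(x)=0\}$ and \eqref{expkol} follows. The step requiring the most care is the reduction of the global supremum to the critical set: one must justify that the extrema of $H$ are attained in the interior (which rests on $H(\pm\infty)=0$) and that $H$ is genuinely differentiable at the maximizing point, which is exactly where the differentiability of $g$ — ensuring continuity of the density of $X_t$ and hence $C^1$-regularity of $H$ — is used.
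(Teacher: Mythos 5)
Your proof is correct, and while its skeleton (form the difference $H$ of the two distribution functions, use $H(\pm\infty)=0$ to reduce the supremum to critical points, identify the critical-point equation with \eqref{xocon}) matches the paper's, the heart of the argument is genuinely different and considerably cleaner. The paper obtains \eqref{xocon} by a somewhat opaque computation: it substitutes $w=2x-z$ in the tail integral to get a representation of $\psi$ supported on $(-\infty,x]$, differentiates that representation term by term, and then reconstitutes the result through a further substitution and an integration by parts (the quantities $I$ and $II$). You instead observe directly that $H'(x)$ is the difference of the two densities, $H'(x)=g(x)e^{\int_0^tF(u,x)du}-\mathbb{E}\,g(x-\sqrt{t}\,G)$, and that the right-hand side of \eqref{xocon} is nothing but the density of $B_t$ rewritten via the symmetry $G\overset{d}{=}-G$; so \eqref{xocon} simply says that the densities of $X_t$ and $B_t$ cross at $x$. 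This buys both brevity and insight into what $\mathcal{X}_0$ is, which the paper's computation obscures. One caveat applies equally to both arguments: differentiating $x\mapsto\mathbb{P}(X_t\leq x)$ pointwise requires continuity of $z\mapsto g(z)e^{\int_0^tF(u,z)du}$, which does not follow from differentiability of $g$ alone (the exponential factor is only Borel in $z$ under the FOED definition); you attribute the $C^1$-regularity of $H$ to the differentiability of $g$, which is slightly too optimistic, but the paper's claim that its representation is ``clearly differentiable'' rests on exactly the same unstated regularity of $F$ in the space variable, so your proof is in no worse shape than the original.
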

\begin{proof}
For any $x\in\mathbb{R}$ we have
	\begin{align*}
		\mathbb{P}(B_t\leq x)
		= \int^{\infty}_{-\infty}\Big(\int_{-\infty}^x\frac{1}{\sqrt{2\pi t}}e^{-\frac{(z-y)^2}{2t}}dy\Big)g(z)dz
		=	\int^{\infty}_{-\infty}\Phi\Big(\frac{x-z}{\sqrt{t}}\Big)g(z)dz.
		 	\end{align*}
 Hence and from the fact that $X$ has  the FOED property we have
 \begin{align}\label{repsi}
	\psi(x) &:= \mathbb{P}(X_t\leq x) - \mathbb{P}(B_t\leq x)   =  \int_{-\infty}^xe^{\int_0^tF(u,z)du}g(z)dz - \int^{\infty}_{-\infty}\Phi\Big(\frac{x-z}{\sqrt{t}}\Big)g(z)dz,\notag
 \\& =  \int_{-\infty}^{\infty}\Big(1_{(-\infty,\ x]}(z)e^{\int_0^tF(u,z)du} - \Phi\Big(\frac{x-z}{\sqrt{t}}\Big)\Big)g(z)dz\notag\\
&= \int_{-\infty}^{x}\Big(e^{\int_0^tF(u,z)du} - \Phi\Big(\frac{x-z}{\sqrt{t}}\Big)\Big)g(z)dz - \int_x^{\infty}\Phi\Big(\frac{x-z}{\sqrt{t}}\Big)g(z)dz.
\end{align}
Substituting in the last integral $w = 2x - z$ yields
\begin{align*}
	\int_x^{\infty}\Phi\Big(\frac{x-z}{\sqrt{t}}\Big)g(z)dz = \int_{-\infty}^x\Phi\Big(\frac{w-x}{\sqrt{t}}\Big)g(2x - w)dw.
\end{align*}
Hence
\begin{align*}
	\psi(x) &= \int_{-\infty}^{x}\Big[\Big(e^{\int_0^tF(u,z)du} - \Phi\Big(\frac{x-z}{\sqrt{t}}\Big)\Big)g(z) - \Phi\Big(\frac{z-x}{\sqrt{t}}\Big)g(2x - z)\Big]dz\\
	&= \int_{-\infty}^{x}\Big[\Big(e^{\int_0^tF(u,z)du} - \Phi\Big(\frac{x-z}{\sqrt{t}}\Big)\Big)g(z) - \Big(1- \Phi\Big(\frac{x-z}{\sqrt{t}}\Big)\Big)g(2x - z)\Big]dz\\
	&= \int_{-\infty}^{x}\Big[e^{\int_0^tF(u,z)du}g(z) - \Phi\Big(\frac{x-z}{\sqrt{t}}\Big)\Big(g(z) - g(2x - z)\Big) - g(2x - z)\Big]dz.
\end{align*}
The last representation of $\psi$ clearly shows that  $\psi$ is differentiable.
 We have
\begin{align*}
	\psi'(x) = e^{\int_0^tF(u,x)du}g(x) - g(x) + \int_{-\infty}^{x}\Big[ &- \frac{1}{\sqrt{t}}\phi\Big(\frac{x-z}{\sqrt{t}}\Big)\Big(g(z) - g(2x - z)\Big)\\
	&+ 2\Phi\Big(\frac{x-z}{\sqrt{t}}\Big)g'(2x - z)- 2g'(2x - z)\Big]dz.
\end{align*}
So if $\psi'(x_0) = 0$, then
\begin{align}\label{pomxo}
	g(x_0)\Big(1 - e^{\int_0^tF(u,x_0)du}\Big) = \int_{-\infty}^{x_0}\Big[& \frac{1}{\sqrt{t}}\phi\Big(\frac{x_0-z}{\sqrt{t}}\Big)\Big(g(2x_0 - z) - g(z)\Big)\\
	&+ 2g'(2x_0 - z)\Big(\Phi\Big(\frac{x_0-z}{\sqrt{t}}\Big)- 1\Big)\Big]dz =: I + II.\notag
\end{align}
We compute $I$ substituting $w = z - x_0$
\begin{align*}
I	&= \int^0_{-\infty}\frac{1}{\sqrt{2\pi t}}e^{-\frac{w^2}{2t}}(g(x_0 - w) - g(x_0 + w))dw\\
	&= \mathbb{E}1_{\{G\leq 0\}}\big(g(x_0 - \sqrt{t}G) - g(x_0 + \sqrt{t}G)\big).
\end{align*}
 To compute $II$ we integrate by parts and substitute $w = z - x_0$
\begin{align*}
	II
	&= \Big[-2g(2x_0 - z)\Big(\Phi\Big(\frac{x_0-z}{\sqrt{t}}\Big)- 1\Big)\Big]^{x_0}_{-\infty} - 2\int_{-\infty}^{x_0}g(2x_0 - z)\frac{1}{\sqrt{t}}\phi\Big(\frac{x_0-z}{\sqrt{t}}\Big)dz\\
	&= g(x_0) - 2\int_{-\infty}^0\frac{1}{\sqrt{2\pi t}}e^{-\frac{w^2}{2t}}g(x_0 - w)dw = g(x_0) - 2\mathbb{E}1_{\{G\leq 0\}}g(x_0 - \sqrt{t}G).
\end{align*}
 Putting together just computed $I$, $II$ and comparing their sum with the LHS of (\ref{pomxo}) yields (\ref{xocon}).
From this and  from the fact that  $\lim_{x\rightarrow -\infty}\psi(x) = \lim_{x\rightarrow \infty}\psi(x) = 0$, we obtain
\begin{align*}
	\sup_{x\in\mathbb{R}}|\psi(x)| = \Big|\int_{-\infty}^{x_0}\Big(e^{\int_0^tF(u,z)du} - \Phi\Big(\frac{x_0-z}{\sqrt{t}}\Big)\Big)g(z)dz - \int_{x_0}^{\infty}\Phi\Big(\frac{x_0-z}{\sqrt{t}}\Big)g(z)dz\Big|,
\end{align*}
for some $x_0$ solving equation (\ref{xocon}). This proves (\ref{expkol}), which finishes the proof.
\end{proof}

\bibliographystyle{plain}

\end{document}